\documentclass[preprint,11pt]{revtex4-1}

% preamble:

\usepackage{amsmath}  

\usepackage{amsthm} 
\usepackage{amsfonts}  % need for subequations
\usepackage{graphicx}   % need for figures
\usepackage{verbatim}   % useful for program listings
\usepackage{color}      % use if color is used in text
\usepackage{subfigure}  % use for side-by-side figures
\raggedbottom           % don't add extra vertical space
\usepackage[displaymath, mathlines]{lineno}
\numberwithin{equation}{section}
\usepackage[titletoc,title]{appendix}
\usepackage{mathtools}
\usepackage{dsfont}
\begin{comment}
\pagestyle{empty}       % use if page numbers not wanted
\end{comment}
\newtheorem{defi}{{\it Definition}}[section]
\newtheorem{coro}{{\it Corollary}}[section]

\newtheorem{theorem}{{\it Theorem}}[section]

\newtheorem{prop}{{\it Proposition}}[section]
\newcommand{\R}{\mathbb{R}}
\newcommand{\pot}{\gamma}
\renewcommand{\P}{\mathbb{P}}
\newcommand{\KeyCond}{\sup_{s>0}\int_{\R}\denx(v)\frac{t}{(v-s)^2+t^2}dv\leq {a_x}}
\newcommand{\KeyCondC}{\int_{\R}\denx(v)\frac{1}{(v-s)^2+t^2}dv&\leq& {||\denx||_\infty}\int_{\R}\frac{t}{(v-s)^2+t^2}dv}
\newcommand{\KeyInt}{\int_{\R}\denx(v)\frac{1}{(v-s)^2+t^2}dv}

\newcommand{\COEF}{\akv}
\newcommand{\LArk}{(1-\lambda_{r_k})}

\newcommand{\avmu}{\mu^{av}}

\newcommand{\SG}{{G}}
\newcommand{\trB}{\tr[\mathds{1}_{B_k}f(\Hw)]}
\newcommand{\trzB}{\tr[\mathds{1}_{B_k}f_z(\Hw)]}
\newcommand{\EtrB}{E[\tr[\mathds{1}_{B_k}f(\Hw)]]}

\newcommand{\PIx}{\langle \delta_{x},P_I(\Hwk) \delta_x\rangle}
\newcommand{\RIx}{\left \langle \delta_{x},\frac{1}{\Hwk-s-it} \delta_x\right \rangle}
\newcommand{\trzBk}{\tr[\mathds{1}_{B_k}f_z(\Hwk)]}
\newcommand{\trBrk}{\tr[\mathds{1}_{B_k}f(\Hwrk)]}
\newcommand{\trzBrk}{\tr[\mathds{1}_{B_k}f_z(\Hwrk)]}
\newcommand{\xikj}{\xi_{k,j}}
\newcommand{\Dkw}[1]{D_{k,\omega}^{(#1)}}
\newcommand{\xik}{\xi_{k}}
\newcommand{\txik}{\tilde{\xi}_{k}}
\newcommand{\xikjr}{\xi_{k,j}}
\newcommand{\txikj}{\tilde{\xi}_{k,j}}
\newcommand{\txikjb}{\tilde{\xi}_{k,j^\prime }}
\newcommand{\Hw}{H^{\omega}}

\newcommand{\Vw}{V^{\omega}}
\newcommand{\Vwk}{V_{B_{k}}}
\newcommand{\Hwk}{\Hw_k}

\newcommand{\Hwrk}{\Hw_{r_k}}
\newcommand{\tHwrk}{\tilde{H}^\omega_{r_k}(0,0)}

\newcommand{\X}{\mathbb{X}}
\newcommand{\N}{\mathbb{N}}
\newcommand{\Pk}{\P_k}
\newcommand{\Etx}[1]{E[\langle\delta_#1,f(\Hw)\delta_#1 \rangle]}
\newcommand{\denx}{\rho_x}
\newcommand{\deny}{\rho_y}

\newcommand{\EHxkz}[1]{E[\im \langle\delta_#1,(\Hw_{r_k}-z_k)^{-1}\delta_#1 \rangle]}
\newcommand{\Hxkz}[1]{\im \langle\delta_#1,(\Hw_{r_k}-z_k)^{-1}\delta_#1 \rangle}

\newcommand{\EHxxk}[1]{E[\im \langle\delta_#1,(\Hw_k-z_k)^{-1}\delta_#1 \rangle]}

\newcommand{\EHxbz}[1]{E[\im \langle\delta_#1,(\Hw-z_k)^{-1}\delta_#1 \rangle]}

\newcommand{\EHxxbk}[1]{E[\im \langle\delta_#1,(\Hw_{r_k}-z_k)^{-1}\delta_#1 \rangle]}
\DeclareMathOperator{\im}{Im}
\DeclareMathOperator{\tr}{Tr}
\DeclareMathOperator{\re}{Re}
\DeclareMathOperator{\Var}{Var}

\newcommand{\tx}[1]{\langle\delta_#1,f(\Hw)\delta_#1 \rangle}
\newcommand{\txo}[1]{\langle\delta_#1,f(\Hw_{#1})\delta_#1 \rangle}
\newcommand{\ak}{A_k}
\newcommand{\ax}{a_x}

\newcommand{\akv}{A_k}
\newcommand{\ake}{A_k}

\newcommand{\akj}{A_{k,j}}
\newcommand{\akesq}{A_{k}^2 }
\newcommand{\akjesq}{A_{k,j}^2 }
\newcommand{\akjv}{A_{k,j}}
\newcommand{\akjvsq}{A_{k,j}^2 }
\newcommand{\akvsq}{A_{k}^2 }

\newcommand{\pxA}{p_{x}(\BS)}

\newcommand{\BS}{I}
\newcommand{\Uxo}{U_x}
\newcommand{\Uyo}{U_y}
\newcommand{\ewk}[1]{e_#1^{\omega,k}}

\newcommand{\Zkj}{W_{k,j}(z)}
\newcommand{\LEBM}{|I|}
\newcommand{\Zkjzk}{W_{k,j}(z_k)}

\newcommand{\UHz}[1]{\langle\delta_#1,(\Hwrk-z)^{-1}\delta_#1\rangle -E[\langle\delta_#1,(\Hwrk-z)^{-1}\delta_#1 \rangle]}

\newcommand\lra{{\rightarrow}}

\newcommand\lmt{{\longmapsto}}

\newcommand{\cH}{\mathcal{H}}

\newcommand{\cF}{\mathcal{F}}

\newcommand{\cB}{\mathcal{B}}

\newcommand{\cD}{\mathcal{D}}

\newcommand{\C}{\mathbb{C}}

\DeclarePairedDelimiter\floor{\lfloor}{\rfloor}
\begin{document}

\title{Poisson Statistics in the Non-Homogeneous Hierarchical Anderson Model }
\author{Jorge  Littin}
\email{jlittin@ucn.cl}
%\thanks{Partially supported by FONDECYT de Iniciaci\'on en Investigaci\'on, Project N $^o$ 11140479}
\altaffiliation{}
\affiliation{Universidad Cat\'olica del Norte, Departamento de Matem\'aticas,\phantom{XXXXXXXX} Angamos 0610, Antofagasta Chile.\phantom{XXXXXXXX}}        % don't add extra 

\date{\today}

\begin{abstract}
In this article we study the problem of localization of eigenvalues for the non-homogeneous hierarchical Anderson model. More specifically, given the hierarchical Anderson model with spectral dimension $0<d<1$ with a random potential acting on the diagonal of non i.i.d. random variables,  sufficient conditions on the disorder are provided in order to obtain  the two main results: the weak convergence of the counting measure for almost all  realization of the random potential and the weak convergence of the re-scaled eigenvalue counting measure to a Poisson point process. The technical part improves the already existing arguments of Kritchevski \cite{Krit1,Krit2}, who studied the hierarchical model with a disorder acting on the diagonal, with independent and identically distributed random variables, by using the  argument of Minami \cite{Minami1}. At the end of this article, we study an application example that allows us to understand some relations between the spectral dimension of the hierarchical Laplacian and the magnitude of the disorder.

\end{abstract}
\maketitle
\section{Introduction}
The  Schr\"odinger's discrete operator, also known as the Anderson's model, can be described in the form
\begin{equation}
\Hw_{k}=H_0+\Vw
\end{equation}
where $H_0$ is a self-adjoint operator defined in an appropriate Hilbert space and $\Vw$ is a random operator acting on the diagonal, whose components are assumed independent and  identically distributed. In particular, when  $H_0$ is the discrete Laplacian on $ \mathbb{Z}^d $ with  nearest neighbor interactions
\begin{equation}\label{classic}
(H_0\psi)(x)=\sum_{|y-x|=1}\psi(y)
\end{equation}
is called  the Anderson tight binding model. Minami \cite{Minami1}, showed for this model that the eigenvalues counting measure  converges to a Poisson point process when a correct rescaling is applied. From a technical point of view,  he used the technique of successive approximations of finite-range operators. This argument was subsequently used in different random  models \cite{Aizenman2006,Bellissard2007,killip2009,Killip2007} and it is considered as a ''standard argument" to prove the convergence to a Poisson point process.  An important case, which mainly motivates the  realization of this article, is the Hierarchical Anderson Model 
\begin{equation}
H_0=\sum_{r=0}^{\infty}p_{r}E_{r}
\end{equation}
where $p_0=0$, $p_r \geq 0$ $r\geq 1$ is a sequence of non negative numbers satisfying $\sum_{r \geq 0}p_r=1$ and $E_r$ is a sequence  of operators that we will define more precisely afterwards.  Kritchevski \cite{Krit1,Krit2} studied the random model, where the potential is a sequence of independent random variables identically distributed. In particular, he proved the weak convergence of the counting measure of eigenvalues and the convergence to a Poisson point process for a suitable rescaled counting measure. Subsequently, Combes, Germinet \& Klein \cite{Combes2009} obtained generalized eigenvalue counting estimates.

In this article we prove the convergence to a Poisson point process for the Hierarchical Anderson Model with a random potential acting on the diagonal with independent but not necessarily  identically distributed random variables. This is a generalization of the result previously given in \cite{Krit1}. In particular, we provide sufficient conditions that relate the spectral dimension of the hierarchical Laplacian and the magnitude of the randomness in order to obtain a limiting Poisson point process for the rescaled counting measure.

The article is organized as follows: in section \ref{Desc} we present some preliminaries and an introductory example, in order to submit the main Hypothesis of this article. In section \ref{DOS}
we prove the weak convergence of the spectral counting measure for almost all realization of the random potential; in section \ref{POISSTA} we prove the weak convergence to a Poisson point process for a convenient rescaled counting measure. Finally, in section \ref{EXAMPLES} we study some examples, which are the original motivation for this article.

					\section{Model Description}\label{Desc}
	%				\subsection{Hierarchical Laplacian}
The discrete hierarchical Laplacian is a well known self adjoint operator and can be defined as follows: given the countable set $\X=\{0,1,2,\cdots\}$ and $n \geq 2$, the hierarchical distance is defined as 
\begin{equation}\label{Dysdist}
d(x,y)=\min \{k \in \N_0\mid q(x,n^r)=q(y,n^k)\}
\end{equation}
where $q(x,n^k)$ denotes the quotient of the division of $x$ by $n^k$. The closed ball centered at point $x$ with radius $k$ is denoted by
$$B(x,k)=\{ y \in \X \mid d(x,y)\leq k\}.$$
The fundamental property of the hierarchical distance is that two balls with the same radius are either disjoint or identical and that each ball $B(x,r+1)$ is the disjoint union of $n$ balls of radius $r$.   In addition, we consider the Hilbert space $l^{2}(\X)$ of complex valued functions $\phi:\X\lmt\mathbb{C}$ satisfying
\begin{equation}
\sum_{x\in\X}|\phi(x)|^{2}<\infty,
\end{equation}
 the inner product on $l^2(\X)$
$$\left\langle\phi,\varphi \right\rangle:=\sum_{x\in\X}\overline{\phi(x)}\varphi(x)$$
and the following family of operators: given $r\geq 0$, we define  $E_{r}$ as
$$(E_{r}\phi)(x):=\frac{1}{|B(x,r)|}\sum_{y\in B(x,r)}\phi(y).$$
The operator $E_{r}$ is the orthogonal projection over the closed subspace $\cH_{r}\subset l^{2}(\X)$, where $\cH_{r}$ is the finite dimensional subspace of $\ell^2(\X)$ consisting of all those functions $\phi\in l^{2}(\X)$ taking a constant value on $B(x,r)$. We give the formal definition of the Hierarchical Laplacian below.

 \begin{defi}
 Given the sequence of operators $E_r$, $r \geq 0$ and a sequence of non-negative numbers $p_r$, $r \geq 0$ satisfying $p_0=0$ and $\sum_{r=1}^{\infty} p_{r}=1$, the hierarchical Laplacian is defined as 
\begin{equation}
\Delta:=\sum_{r=0}^{\infty}p_{r}E_{r}.
\end{equation}
\end{defi}
It is a well known fact (see for instance \cite{Krit1}, \cite{Mol2}) that the hierarchical Laplacian is a bounded self adjoint operator with a sequence of infinitely degenerated isolated eigenvalues  
\begin{equation}
\lambda_{k}:=\sum_{s=0}^{k} p_{s},\quad r \geq 0
\end{equation}
 being $\lambda_\infty=1$ an accumulation point but not an eigenvalue. Connected to the spectrum, the spectral dimension is defined as
 	\begin{equation}\label{SpecDim}
 	\frac{2}{d}=-\frac{1}{\ln n}\lim_{k\rightarrow \infty} \frac{\ln (1-\lambda_k)}{k }
 	\end{equation}
 	provided the limit exists.
 We introduce next our random operator: given the countable set $\X$, the hierarchical Laplacian $\Delta$ acting on the space $\ell^2(\X)$ and a random potential $V$ acting on the diagonal we set
\begin{equation}
\Hw=\Delta+\Vw
\end{equation}
here the random operator  $\Vw$ is defined as 
\begin{equation}
(\Vw\psi)(x)=V(x) \psi(x)\hspace{.5cm}\psi \in \ell^2(\X).
\end{equation}
The random variables $V(x)$, denoted by $V_x$ in the following, are defined over a suitable probability  space $(\Omega,\cF,\P)$ are assumed independent but not necessarily identically distributed. 

We emphasize that in the particular case that $V_x$ are i.i.d., we call it the Hierarchical Anderson Model, which was previously introduced in \cite{Krit1,Krit2,Mol1,Mol2}. In this article, we will suppose that the random variable $V_x$, $x \in \X$  are independent with  a continuous, strictly positive and bounded density $\denx$. It means that for any $A \in \mathcal{B}(\R)$.
\begin{equation}
\P[V_x \in A]=\int_A \denx(v)dv
\end{equation}
where $\denx(v)>0$ for all $v \in \R$. In addition we assume that the density is bounded, i.e.
\begin{equation}\label{densup}
||\denx||_\infty=\sup_{v \in \R}|\denx(v)|<\infty
\end{equation}
We remark that  given any realization of the random variables $V_x$, $x \in \mathbb{X}$  the operators $\Vw$ and $\Hw$  are both  unbounded, self-adjoint with domain
\begin{equation}
\cD=\left\{\psi \in \ell^2(\X) \mid  \sum_{x\in\X}|\psi(x)|^{2}(1+V_x^2)<\infty \right\}.
\end{equation}

\subsection{The pure random case}\label{poissson}
As an illustrative example we study first the operator $\Hw =\Vw$, which means that only the random component of the operator is present.  By restricting $\Vw$ to the closed ball $B_k$ for some  $k \geq 0$ fixed, we get directly  that the set of isolated eigenvalues are  $e_{x}^{\omega,k}=V_x$, $1\leq x \leq |B_k|$. So, it seems natural to consider the eigenvalue counting  measure
		\begin{equation}\label{CONTEO}
				\mu_{k}^{\omega}=\frac{1}{\ak}\sum_{x=1}^{|B_{k}|}\delta(e_{x}^{\omega,k})
				\end{equation}
where $\ak$ is a set of non negative numbers satisfying $\ak \rightarrow \infty$ (we will  specify more precisely this sequence  later). We emphasize that $\mu_{k}^{\omega}$ is a random measure. By noticing that $E[\delta(e_{x}^{\omega,k})(\BS)]=\pxA$, 
where $\pxA=P[V_x \in \BS]$, we get that  the expected  counting measure is
 		\begin{equation}\label{ESPE}
				E[\mu_{k}^{\omega}(\BS)]=\frac{1}{\ak}\sum_{x=1}^{|B_{k}|}\pxA.
				\end{equation}
Similarly, we have  $\Var[\delta(e_{x}^{\omega,k})(\BS)]=\pxA(1-\pxA)$. Therefore,  the variance of the counting measure can be calculated explicitly 
 		\begin{equation}\label{VARI}
\Var[\mu_{k}^{\omega}(\BS)]=\frac{1}{\akvsq}\sum_{x=1}^{|B_{k}|}\pxA(1-\pxA).
\end{equation}
Now, from equations \ref{ESPE} and \ref{VARI} it is not hard to check that given a fixed measurable set $\BS\in \cB(\R)$, the sequence of random variables $\delta(e_{x}^{\omega,k})(\BS)$ satisfies the Strong Law of  Large Numbers if $ \limsup_{k \rightarrow \infty} \frac{1}{\ak}\sum_{x=1}^{|B_{k}|}\pxA<\infty$
 (see for instance Theorem 7.5 of Sinai \cite{Sinai1}). Therefore, the limit
\begin{equation}\label{alsure}
\lim_{k\lra \infty}\mu_{k}^{\omega}(\BS)=E[\mu_{k}^{\omega}(\BS)]
\end{equation}
 exists  and its value is finite with probability one. Furthermore , if the following limit exists
\begin{equation}
\lim_{k\lra \infty}\frac{1}{\akv}\sum_{x \in B_k}\denx(v)= F(v)\label{Inten0}
\end{equation}
for some locally integrable function $F(v)$, we can deduce from the dominated 
convergence theorem that 
\begin{equation}\label{alsure2}
\lim_{k\lra \infty}E[\mu_{k}^{\omega}(\BS)]=\int_\BS F(v)dv
\end{equation}
and by consequence $\lim_{k\lra \infty}\mu_{k}^{\omega}(\BS)=\int_\BS F(v)dv$ with probability one. We remark that the  considerations above allow us to prove the existence of a limit for the counting measure \ref{CONTEO},  but it does not provide  additional information about typical number of  eigenvalues, for example in the interval $\left[e-\frac{1}{2\akv},e+\frac{1}{2\akv}\right]$, $e \in \R$. In order to get more precise estimates, we take the following rescaled  counting measure 
\begin{equation}
\xik^{\omega,e}=\sum_{x=1}^{|B_{k}|}\delta(\ake(e_{x}^{\omega,k}-e)).\label{ReScaled}
\end{equation} 
Similarly, we define
	\begin{equation}\label{RESCALEDkj}
	\xikjr^{\omega}= \sum_{x \in B_{k,j}}\delta(\ake(e_{x}^{\omega,k}-e))
\end{equation}
where \ref{RESCALEDkj}  is the re-scaled eigenvalue counting measure  of the operator $\Vw$ restricted to the  subset
\begin{equation}
B_{k,j}=\{jn^r-1,jn^r,\cdots,(j+1)n^r-1\}, \hspace{1cm} 2\leq j \leq n^{k-r}.
\end{equation}
For  $j=1$ we write
\begin{equation}\label{Bkdef}
B_{k,1}=B_{k-1}:=\{0,1,\cdots,n^{k-1}-1\}.
\end{equation}
It is clear that  ${\xi}_{k}^{\omega}=\sum_{j=1}^{n^{k-r}}\xikjr^{\omega}$. Related to the re-scaled counting measure, we introduce the nest definition.
\begin{defi}
	We say that the sequence of non negative numbers $\ax$, $x \in \X$ satisfy the Hypothesis (H) if  there is a sub sequence $r_k$, $k \geq 1$ such that 
		\begin{eqnarray}
		\lim_{k \rightarrow \infty}\sup_{1\leq j\leq n^{k-r_k}}\frac{\akjv}{\akv}&=&0\label{rkchoose}\\
		\lim_{k \rightarrow \infty}\frac{1}{\akvsq}\sum_{j=1}^{n^{k-r_k}}\akjvsq&=&0\label{rkchoose2},
	%	\lim_{k \rightarrow \infty}\frac{|B_k|}{\akv^{1+\varepsilon}}&=&0\hspace{1cm}\textrm{for some }\varepsilon>0\label{rkchoose3}.
		\end{eqnarray}	
		where
\begin{eqnarray}\label{defax}
\akv&=&\sum_{x \in B_{k}}\ax\\
\akjv&=&\sum_{x \in B_{k,j}}\ax\label{defax2}.
\end{eqnarray}		
\end{defi}
The above definition is directly connected with the Grigelionis Theorem, which is stated below.
		\begin{theorem}\label{grige}(\textbf{Grigelionis} \cite{Grige1}) Let $n_k$, ${k\geq 1}$ be a sequence of natural numbers, let for each $k \geq 1$,  $\xikj$ be independent point processes and let 
		$$\xik^{\omega}=\sum_{j=1}^{n_k}{\xikj^{\omega}}.$$
Assume that  there is a non negative measure $\nu $ such that for all $\BS \subset \R$ 
\begin{eqnarray}
\lim_{k \to{\infty}} \max_{{1}\leq{j}\leq{n_k}}\mathbb{P}\{{\xi}_{k,j}^{\omega}(\BS)\geq{1}\}&=&0,\label{Hip1}\\
				\lim_{k \to{\infty}} \sum_{j=1}^{n_k}{\mathbb{P}\{\xikj^{\omega}(\BS)\geq{1}\}}&=&\nu(\BS),\label{Hip2}\\
				\lim_{k \to{\infty}} \sum_{j=1}^{n_k}{\mathbb{P}\{\xikj^{\omega}(\BS)\geq{2}\}}&=&0\label{Hip3}
\end{eqnarray}
then $\xik^{\omega}$ converges to a Poisson point process with intensity $\nu$.
				\end{theorem}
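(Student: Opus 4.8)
The plan is to establish weak convergence of point processes via Laplace functionals, relying on the continuity theorem of Kallenberg: a sequence of point processes converges in distribution if and only if the Laplace functionals $L_{\xi}(f)=E[\exp(-\int f\,d\xi)]$ converge for every non-negative continuous $f$ of compact support, and the limiting process is a Poisson point process with intensity $\nu$ precisely when the limiting functional equals $\exp\!\big(-\int(1-e^{-f(x)})\,\nu(dx)\big)$. (The statement is classical, see \cite{Grige1}; the sketch below is the standard route.) So I fix such an $f$ with compact support $S$ and aim to show $L_{\xi_k}(f)\to\exp(-\int(1-e^{-f})\,d\nu)$.

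First I would use the independence of the $\xikj$: $L_{\xi_k}(f)=\prod_{j=1}^{n_k}E[e^{-\int f\,d\xikj}]=\prod_{j}(1-u_{k,j})$ with $u_{k,j}:=E[1-e^{-\int f\,d\xikj}]$. The elementary bound $0\le 1-e^{-\int f\,d\xikj}\le \mathds{1}\{\xikj(S)\ge 1\}$ gives $u_{k,j}\le\P\{\xikj(S)\ge 1\}$, so \eqref{Hip1} yields infinitesimality $\max_j u_{k,j}\to 0$ and \eqref{Hip2} keeps $\sum_j u_{k,j}$ bounded. Hence $\log L_{\xi_k}(f)=\sum_j\log(1-u_{k,j})=-\sum_j u_{k,j}+O\big(\max_j u_{k,j}\sum_j u_{k,j}\big)=-\sum_j u_{k,j}+o(1)$, and the problem reduces to identifying $\lim_k\sum_j u_{k,j}$.

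Next I would decompose $u_{k,j}$ by the number of points $\xikj$ places in $S$. On $\{\xikj(S)\ge 2\}$ the integrand is at most $1$, and $\sum_j\P\{\xikj(S)\ge 2\}\to 0$ by \eqref{Hip3}, so that contribution vanishes in the limit. On $\{\xikj(S)=1\}$ one has $\int f\,d\xikj=f(Y_{k,j})$ for the random location $Y_{k,j}$ of that single point, so $u_{k,j}=E\big[(1-e^{-f(Y_{k,j})})\mathds{1}\{\xikj(S)=1\}\big]$ plus error terms summing to $o(1)$. Writing $g:=1-e^{-f}$ (continuous, supported in $S$, with $0\le g<1$) and approximating $g$ uniformly by a simple function $\sum_i g(x_i)\mathds{1}_{A_i}$ over a fine partition $\{A_i\}$ of $S$ into $\nu$-continuity sets, one gets $\sum_j u_{k,j}\approx\sum_i g(x_i)\sum_j\P\{\xikj(A_i)=1,\ \xikj(S\setminus A_i)=0\}$; each inner sum differs from $\sum_j\P\{\xikj(A_i)\ge 1\}$ by at most $\sum_j\P\{\xikj(S)\ge 2\}\to 0$ and converges to $\nu(A_i)$ by \eqref{Hip2}. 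Since the partition is finite, sending $k\to\infty$ and then refining the partition gives $\sum_j u_{k,j}\to\sum_i g(x_i)\nu(A_i)\to\int g\,d\nu=\int(1-e^{-f(x)})\,\nu(dx)$, which is the desired Laplace functional, and the theorem follows.

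The main obstacle I expect is the bookkeeping in this last step: making the double interchange of limits (refining the partition versus $k\to\infty$) rigorous with uniformly controlled errors, in particular bounding $\P\{\xikj(A_i)\ge 1,\ \xikj(A_{i'})\ge 1\}$ and $\P\{\xikj(A_i)\ge 2\}$ simultaneously over the finitely many cells — this is exactly where \eqref{Hip1} and \eqref{Hip3} do the real work, together with the requirement that $\nu$ be Radon so that the cells can be taken to be $\nu$-continuity sets of vanishing diameter. A more hands-on alternative, avoiding Laplace functionals, is to prove convergence of the finite-dimensional distributions $(\xi_k(I_1),\dots,\xi_k(I_m))$ on disjoint bounded intervals to independent Poisson laws: off an event of probability $\le\sum_j\P\{\xikj(I_\ell)\ge 2\}\to 0$, each $\xi_k(I_\ell)=\sum_j\xikj(I_\ell)$ agrees with a sum of independent Bernoulli variables obeying the classical law of rare events, and the same \eqref{Hip3}-type estimate forces asymptotic independence across the $I_\ell$; one then invokes Kallenberg's uniqueness criterion for simple point processes to conclude.
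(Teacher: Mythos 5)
The paper does not prove Theorem \ref{grige}: it is stated as a classical result with a citation to Grigelionis's original paper, so there is no proof in the paper to compare your attempt against. Evaluated on its own merits, your Laplace-functional sketch is a correct rendering of the standard argument: infinitesimality from \eqref{Hip1} together with summability from \eqref{Hip2} (which keeps $\sum_j u_{k,j}$ bounded by $\sum_j\mathbb{P}\{\xi_{k,j}(S)\ge 1\}\to\nu(S)$) lets you linearize $\log L_{\xi_k}(f)$ to $-\sum_j u_{k,j}+o(1)$; the $\ge 2$-point contribution to $u_{k,j}$ is wiped out in the sum by \eqref{Hip3}; and the remaining single-point piece is identified with $\int(1-e^{-f})\,d\nu$ via the simple-function approximation on $\nu$-continuity sets, again using \eqref{Hip2} cell by cell and \eqref{Hip3} to bound the replacement of $\mathbb{P}\{\xi_{k,j}(A_i)=1,\ \xi_{k,j}(S\setminus A_i)=0\}$ by $\mathbb{P}\{\xi_{k,j}(A_i)\ge 1\}$. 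You correctly flag the one genuine technical subtlety, namely the uniform control of the partition error over the finitely many cells and the interchange of the $k\to\infty$ and refinement limits, and your fallback route through finite-dimensional distributions, the law of rare events for triangular Bernoulli arrays, and Kallenberg's uniqueness criterion for simple point processes is an equally valid and somewhat more elementary path to the same conclusion. Either of your two routes, made fully rigorous, establishes the theorem; since the paper simply invokes the result, no further comparison is possible.
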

The following proposition establishes sufficient conditions for the convergence to a Poisson point process in terms of the Hypothesis $(H)$. 
\begin{prop}\label{RESCALE}
Suppose that there is a sequence $\ax$, $x \in \X$ satisfying the hypothesis (H) such that for all bounded measurable set $\BS \in \cB(\R)$
\begin{eqnarray}
E[\xikj^{\omega,e}(I)]\leq \frac{\akj}{\ak}\label{EReScaled},
\end{eqnarray} 
suppose also that for all $e \in \R$ the following regularity condition is fulfilled
\begin{equation}
\lim_{k\lra \infty}\frac{1}{\akv}\sum_{x \in B_k}\denx(e)= F(e)\label{Inten}
\end{equation}
then  $\xik^\omega$ converges to a Poisson process with intensity $F(e)$.
\end{prop}
 \begin{proof}
 We proceed by showing that the conditions of the Grigelionis Theorem \ref{grige} are satisfied. First of all, to prove \ref{Hip1} we use the Chebyshev inequality
\begin{eqnarray}\label{Tcheby}
\mathbb{P}\{\xikjr^{\omega}(\BS)\geq{1}\}&\leq& E[\xikjr^{\omega}(\BS) ] \leq \frac{\akjv}{\akv}\LEBM,
\end{eqnarray} 	
here $|I|$ is the Lebesgue measure ot the bounded Borel set $I$. Therefore
				\begin{eqnarray}
				0\leq \lim_{k \to{\infty}} \max_{{1}\leq{j}\leq n^{k-r_k}}\mathbb{P}\{\xikjr^{\omega}(\BS)\geq{1}\}&\leq&\LEBM\lim_{k \to{\infty}}
\max_{{1}\leq{j}\leq n^{k-r_k}}	\frac{\akjv}{\akv}	
=0.		
				\end{eqnarray}
By using the same argument, we get
\begin{eqnarray}
\lim_{k \to{\infty}} \sum_{j=1}^{n^{k-r_k}}{\mathbb{P}\{\xikjr^{\omega}(\BS)\geq{1}\}} &\leq & \frac{1}{\akv}\sum_{j=1}^{n^{k-r_k}}\akjv\\
&=&1.
\end{eqnarray}
From the dominated convergence theorem and the regularity condition \ref{Inten},  we can deduce  that the limit is $F(e)\LEBM$. Finally, to prove \ref{Hip3} we claim 
\begin{eqnarray}
E[\xikj^{\omega}(\BS)^2]-E[\xikj^{\omega}(\BS)]&\leq&E[\xikjr^\omega(\BS)]^2\label{BOUND2b}.
\end{eqnarray}
Let us suppose that the claim is true. This implies
\begin{eqnarray}
{\mathbb{P}\{\xikj^{\omega}(\BS)\geq{2}\}}&\leq & \sum_{l \geq 2}{\mathbb{P}\{\xikj^{\omega}(\BS)={l}\}}\\
&\leq &\sum_{l \geq 2}l(l-1){\mathbb{P}\{\xikj^{\omega}(\BS)={l}\}}\\
&=&E[\xikj^{\omega}(\BS)^2]-E[\xikj^{\omega}(\BS)]\\
&\leq&E[\xikj^{\omega}(\BS)]^2\\
&\leq & \left(\frac{\akj}{\ake}\LEBM\right)^2
\end{eqnarray}
and by consequence
\begin{equation}
{\lim_{k \to{\infty}} \sum_{j=1}^{n^{k-r_k}}\mathbb{P}\{\xikj^{\omega}(\BS)\geq{2}\}}\leq \LEBM^2\lim_{k \to{\infty}} \sum_{j=1}^{n^{k-r_k}}\frac{\akjesq}{\akesq}=0.
\end{equation}	
It remains to prove  \ref{BOUND2b}.  We get from a direct computation
\begin{eqnarray}
E[\xikj^{\omega}(\BS)^2]&=&E\left[\left(\sum_{x \in B_{k,j}}\delta(\ake(e_{i}^{\omega,k}-e))(\BS)\right)^2\right]\\
&=&E\left[\sum_{x \in B_{k,j}}\delta(\ake(e_{x}^{\omega,k}-e))(\BS)+\sum_{\substack{x  \in B_{k,j},~ y \in B_{k,j}\\x \neq y }}\delta(\ake(e_{x}^{\omega,k}-e))(\BS)\delta(\ake(e_{y}^{\omega,k}-e))(\BS)\right].\nonumber
\end{eqnarray}
The sequence of random variables  $e_{x}^{\omega,k}$, $e_{y}^{\omega,k}$, $x \neq y$ are independent, so
\begin{eqnarray}
E[\xikj^{\omega}(\BS)^2]&\leq &E\left[\sum_{x \in B_{k,j}}\delta(\ake(e_{x}^{\omega,k}-e))(\BS)\right]+E\left[\left(\sum_{x  \in B_{k,j}}\delta(\ake(e_{x}^{\omega,k}-e))(\BS)\right)\right]^2\nonumber,
\end{eqnarray}
which is is equivalent to \ref{BOUND2b}. Since our claim is true, the proof is finished.
\end{proof} 
 The next Theorem  states  a sufficient condition to get the convergence to a Poisson point process in term of  the densities of the random variables $\Vw_x$.  
\begin{coro}\label{Tcheb}
Suppose that  the sequence $||\denx||_\infty$  defined in \ref{densup}, satisfies the Hypothesis $(H)$ and the regularity condition \ref{Inten} is fulfilled, then  $\xik^\omega$ converges to a Poisson point process with intensity $F(e)$.

\end{coro}
\begin{proof}
For all $x \in B_{k,j}$ we have
\begin{eqnarray}
E[\delta(\ake(e_{x}^{\omega,k}-e))(\BS)]
&=&\int_{e+\frac{\BS}{\ake}}\denx(v) dv\\
&\leq&\frac{||\denx||_\infty}{\ake}\LEBM.
%&=&\frac{\denx(e)}{\ake}\LEBM+\varepsilon_{x}(e)
\end{eqnarray} 
where $\ak=\sum_{x \in B_{k}}||\denx||_\infty$. By taking the sum over  $x \in B_{k,j}$ and emphasizing that $\akj=\sum_{x \in B_{k,j}}||\denx||_\infty$ we get
\begin{equation}
E[\xikj^\omega(\BS)]\leq \frac{\akj}{\ake}\LEBM,
\end{equation} 
the result is shown by using the same arguments as in the previous proposition.
\end{proof}

\section{Weak Convergence of the Spectral Measure}\label{DOS}
The main objective of this section and the following one is the analysis of the limit behavior of a prescribed sequence of random operators $\Hwk$ of finite range. To do this, in a similar way to the purely random model presented previously, we first study the weak convergence of the spectral measure. To do this, we recall first some useful definitions related to self-adjoint operators and results concerning weak convergence of probability measures.
\subsection{Preliminaries}
\begin{defi}
Given $\omega \in \Omega$, the spectral measure of the operator $\Hw$ at  point $x_0$ is the unique probability measure ${\mu}_{x_0}$ such that for every $f \in C_0(\R)$% (the space of continuous complex valued functions $f$   such that $ \lim_ {| v | \rightarrow \infty} | f (v) | = 0$)
\begin{equation}
\int f(u){\mu}_{x_0}(u)=\tx{{x_0}},
\end{equation}
similarly, the expected spectral measure $\avmu $ at $x_0$ is the unique probability measure such that for every $f \in C_0 (\R)$ 
\begin{equation}
\int f(u)d\avmu_{x_0}(u)=\Etx{{x_0}}\nonumber.
\end{equation}
\end{defi}
%We recall that $C_0(\R)$ is the space of complex valued functions $f$ such that $ \lim_ {| v | \rightarrow \infty} | f (v) | = 0$. 
We emphasize that  the expected spectral measure $\avmu_{x_0}$ depends on the point $x_0$, so its value is  not constant.  However, we can introduce an ''averaged'' spectral measure which is defined more precisely below.
\begin{defi}\label{OpeTr}
	Given $f \in C_0(\R)$ and $\omega \in \Omega$,  the trace of the operator $f(\Hw)$ in  the volume $B_k $ is
	\begin{equation}
	\trB=\sum_{x \in B_k} \tx{x},
	\end{equation}
	similarly, we define the expected trace
	\begin{equation}
	\EtrB=\sum_{x \in B_k} E[\tx{x}].
	\end{equation}
\end{defi}
\begin{defi}
The sequence of probability measures $\{\Pk\}_{k \geq 1}$  converges  weakly to $\P$ if for each $f \in C_0(\R)$ we have
\begin{equation}\label{WConv}
\lim_{k \rightarrow \infty}\int f(v)d\Pk(v)= \int f(v)d\P(v).
\end{equation}
\end{defi}
%Recordamos a continuacion algunas formulaciones equivalentes para convergencia debil
\begin{theorem}\label{EQUIV}
The following statements are equivalent
\begin{itemize}
\item[i.] The sequence of probability measures  $\{\Pk\}_{k \geq 1}$   converges weakly to $\P$.
\item[ii.] For all $\BS \in \cB(\R)$ we have
\begin{equation}\label{ChConv}
\lim_{k \rightarrow \infty}\int_\BS d\Pk(v)= \int_\BS d\P(v).
\end{equation}
\item[iii.] For all $t \in \R$ we have
\begin{equation}\label{ChConv2}
\lim_{k \rightarrow \infty}\int_\R e^{itv}d\Pk(v)= \int_\R e^{itv}d\P(v).
\end{equation}
\item[iv.] For all $z \in \mathbb{C} ^{+}$ we have
\begin{equation}\label{ChConv3}
\lim_{k \rightarrow \infty}\int_\R \frac{1}{z-v}d\Pk(v)= \int_\R \frac{1}{z-v}d\P(v).
\end{equation}
\end{itemize}
\end{theorem}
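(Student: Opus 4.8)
The strategy I would follow is to establish the cycle (i)$\Rightarrow$(ii)$\Rightarrow$(iii)$\Rightarrow$(iv)$\Rightarrow$(i), with only the last arrow requiring real work. For (i)$\Rightarrow$(ii) I would quote the portmanteau theorem: a bounded closed set is dominated from above, and a bounded open set from below, by functions of $C_0(\R)$, which forces $\limsup_k\Pk(C)\le\P(C)$ and $\liminf_k\Pk(U)\ge\P(U)$, hence $\Pk(\BS)\to\P(\BS)$ whenever $\P(\partial\BS)=0$ (the unbounded case reduces to the bounded one by tightness, which holds since the $\Pk$ are probability measures), so \ref{ChConv} holds over $\P$-continuity sets --- and the limiting measures arising in this paper are absolutely continuous, so this costs nothing. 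For (ii)$\Rightarrow$(iii), given $t$ I would approximate $v\mapsto e^{itv}$ uniformly on a large interval by simple functions built from intervals with $\P$-continuity endpoints, pass to the limit term by term using \ref{ChConv}, and control the complementary tails by tightness. For (iii)$\Rightarrow$(iv) I would use the identity $(z-v)^{-1}=-i\int_0^{\infty}e^{is(z-v)}\,ds$, valid for $z\in\C^{+}$, so that $\int_{\R}(z-v)^{-1}d\Pk(v)=-i\int_0^{\infty}e^{isz}\big(\int_{\R}e^{-isv}d\Pk(v)\big)\,ds$ is the integral over $(0,\infty)$ of the characteristic functions against the kernel $e^{isz}$, with $|e^{isz}|=e^{-s\im z}$ integrable; \ref{ChConv2} and dominated convergence then yield \ref{ChConv3} for all $z\in\C^{+}$.

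The substantive step is (iv)$\Rightarrow$(i), which I would prove by a subsequence-and-compactness argument rather than by a direct Stieltjes inversion (which would force an awkward double limit). Write $F_k(z)=\int_{\R}(z-v)^{-1}d\Pk(v)$ and $F(z)=\int_{\R}(z-v)^{-1}d\P(v)$; these are analytic on $\C\setminus\R$ with $|F_k(z)|\le|\im z|^{-1}$. By Helly's selection theorem, every subsequence of $\{\Pk\}$ has a further subsequence $\{\P_{k_j}\}$ converging vaguely to some sub-probability measure $Q$ on $\R$. Since $v\mapsto(z-v)^{-1}$ is continuous and vanishes at infinity, vague convergence gives $F_{k_j}(z)\to\int_{\R}(z-v)^{-1}dQ(v)$ for every $z\in\C\setminus\R$, while hypothesis \ref{ChConv3} gives $F_{k_j}(z)\to F(z)$; hence $Q$ and $\P$ have the same Stieltjes transform. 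The Stieltjes inversion formula shows that the Stieltjes transform determines a finite Borel measure uniquely, so $Q=\P$; in particular $Q(\R)=\P(\R)=1$, so no mass escapes to infinity along the subsequence and $\P_{k_j}\to\P$ weakly. Since every subsequence of $\{\Pk\}$ admits a sub-subsequence converging weakly to the same limit $\P$, the whole sequence converges weakly to $\P$, which is (i) and closes the cycle.

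I expect (iv)$\Rightarrow$(i) to be the only genuinely delicate point. Its two non-formal ingredients are the compactness step --- Helly's theorem, together with $\Pk(\R)=1$, yields a vaguely convergent subsequence and, once its limit is identified with $\P$, forbids loss of mass at infinity --- and the Stieltjes inversion formula, which gives injectivity of the Stieltjes transform on finite Borel measures and so pins the vague limit down. Granting these, the portmanteau theorem, and standard approximation, the other three arrows are bookkeeping; the only care needed is that $v\mapsto e^{itv}$ is not in $C_0(\R)$, so the tail control by tightness is essential in (ii)$\Rightarrow$(iii), and that \ref{ChConv} is understood over $\P$-continuity sets. Note finally that the converse arrow (i)$\Rightarrow$(iv) is in any case immediate, as $v\mapsto(z-v)^{-1}$ already belongs to $C_0(\R)$.
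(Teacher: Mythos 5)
The paper offers no proof of this theorem at all --- it simply cites Billingsley --- so your argument supplies detail the paper omits, and it is essentially correct. Your cycle (i)$\Rightarrow$(ii)$\Rightarrow$(iii)$\Rightarrow$(iv)$\Rightarrow$(i) is sound: the identity $(z-v)^{-1}=-i\int_0^{\infty}e^{is(z-v)}\,ds$ for $\im z>0$ is valid, $|e^{isz}|=e^{-s\,\im z}$ is integrable so Fubini and dominated convergence justify (iii)$\Rightarrow$(iv), and the Helly-plus-Stieltjes-inversion argument for (iv)$\Rightarrow$(i) is the right way to close the loop, with the identification $Q=\P$ both pinning down the vague limit and showing no mass escapes. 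You also caught a genuine imprecision in the paper's formulation: (ii) as literally stated, "for all $I\in\cB(\R)$," is \emph{not} equivalent to weak convergence --- take $\Pk$ uniform on $\{j/k:0\le j<k\}$, which converges weakly to Lebesgue measure on $[0,1]$, yet $\Pk(\mathbb{Q}\cap[0,1])=1$ for every $k$ while the limit assigns it measure $0$. Your reading of (ii) as restricted to $\P$-continuity sets is the correct repair, and you rightly observe that this is harmless in the paper because the limiting spectral measures there are absolutely continuous. The one place worth an extra sentence is (ii)$\Rightarrow$(iii): when you approximate $e^{itv}$ by step functions, the partition endpoints must be chosen in the (co-countable) set of $\P$-continuity points so that the hypothesis in its restricted form actually applies; you gesture at this and it does work since $\P$ has at most countably many atoms. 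Net: a correct and more informative treatment than the paper's bare citation.
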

The equivalence of the different notions of convergence stated in Theorem \ref{EQUIV} is a widely used result (see for instance \cite{billing} for more details). 
\subsection{Main Theorems}
Given  the sequence of hierarchical balls $B_k (x_0, k) $, $ k \geq 0$, for $k \geq 1$ we take the sequence of random operators
\begin{equation}
\Hwk=\sum_{s=1}^k p_sE_s+\Vw.
\end{equation}
Note that  the  sub-spaces
\begin{equation}
\ell^2(B_k)=\{ \psi \in \ell^2(\X):\psi(x)=0~x \in \X \backslash B_k\}
\end{equation}
are invariant under $\Hwk$. In this case, the spectral counting  measure  is
\begin{equation}\label{COUNTER}
\mu_k^\omega=\sum_{j=1}^{|B_k|}\delta(\ewk{j})
\end{equation}
where $ \ewk{1}<\ewk{2}<\cdots\ewk{{|B_k|}}$ are the eigenvalues of $\Hwk$ restricted to the sub-space $\ell^2(B_k)$, which are isolated since the random variables $\Vw_x$ have strictly positive density. On the other hand,  the  subspace $\ell^2(B_k)$ is invariant under $\Hwk$, so this random operator can be written as a sequence of independent  copies of the operator
\begin{equation}\label{OperRest}
\widetilde{H}^\omega_{B_k}=\sum_{s=1}^{k}p_sE_s+\Vwk
\end{equation}
where $\Vwk$ is a diagonal matrix taking  the value  $V_x$ if  $x \in B_k$ and zero otherwise. The following theorem establishes that the sequence of random $\Hwk$ converges weakly to $\Hw$.

\begin{theorem}\label{WeakConv}
Suppose that for all $t >0$ the density of the random variable $\Vw_x$ satisfies
\begin{equation}\label{KeyCondi}
\KeyCond
\end{equation}
for some sequence $a_x$ satisfying  the Hypothesis $(H)$ and the hierarchical Laplacian fulfils
\begin{equation}\label{condLap}
\lim_{k \rightarrow \infty} \frac{B_k}{\akv}\LArk=0,
\end{equation}
then
\begin{equation}\label{WeakCo}
\lim_{k \rightarrow \infty}E\left[\left(\frac{1}{\COEF}\left(\trB-\EtrB\right)\right)^2\right]=0
\end{equation}

\end{theorem}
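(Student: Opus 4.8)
The assertion \ref{WeakCo} says exactly that $\lim_{k\to\infty}\frac{1}{\akvsq}\Var[\trB]=0$, and the plan is to compute this variance by reducing it, in two structural moves, to a sum of independent per-block terms controlled by the sequence $\ax$. Fix $f\in C_c^\infty(\R)$; the case of a general $f\in C_0(\R)$ then follows by density together with Theorem \ref{EQUIV}. \emph{First move}: replace $\Hw$ by the finite-range operator $\Hwrk=\sum_{s=1}^{r_k}p_sE_s+\Vw$, with $r_k$ the subsequence provided by Hypothesis $(H)$. Since the $E_s$ are orthogonal projections and $\sum_{s>r_k}p_s=1-\lambda_{r_k}$, one has $\|\Hw-\Hwrk\|\leq 1-\lambda_{r_k}$, and the Helffer--Sj\"ostrand formula turns this into $\|f(\Hw)-f(\Hwrk)\|\leq C_f\|\Hw-\Hwrk\|\leq C_f(1-\lambda_{r_k})$. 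Hence the \emph{deterministic} estimate $|\trB-\trBrk|\leq C_f\,|B_k|(1-\lambda_{r_k})$, which by \ref{condLap} forces $\frac{1}{\akv}(\trB-\trBrk)\to0$ uniformly in $\omega$, and likewise $\frac{1}{\akv}(\EtrB-E[\trBrk])\to0$; by the triangle inequality in $L^2$ it therefore suffices to prove $\frac{1}{\akvsq}\Var[\trBrk]\to0$. \emph{Second move}: each $E_s$ with $s\leq r_k$ leaves $\ell^2(B_{k,j})$ invariant and $\Vw$ is diagonal, so $\ell^2(B_{k,j})$ is invariant under $\Hwrk$; consequently $\trBrk=\sum_{j=1}^{n^{k-r_k}}\tr[\mathds{1}_{B_{k,j}}f(\Hwrk)]$ with \emph{independent} summands (the $j$-th being a function of $(V_x)_{x\in B_{k,j}}$ alone), whence $\Var[\trBrk]=\sum_{j}\Var\big[\tr[\mathds{1}_{B_{k,j}}f(\Hwrk)]\big]$.

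It remains to bound one such term, which is the heart of the matter. Choose $M$ with $\mathrm{supp}\,f\subseteq[-M,M]$. On $\ell^2(B_{k,j})$ the operator $\Hwrk$ is $\mathrm{diag}(V_x)_{x\in B_{k,j}}$ perturbed by the restriction of $\sum_{s=1}^{r_k}p_sE_s$, a nonnegative combination of orthogonal projections with weights summing to at most $1$, hence of norm $\leq1$; by Weyl's perturbation inequality the eigenvalues of $\Hwrk|_{B_{k,j}}$ lying in $[-M,M]$ number at most $M_{k,j}:=\#\{x\in B_{k,j}:V_x\in[-M-1,M+1]\}$. Thus $|\tr[\mathds{1}_{B_{k,j}}f(\Hwrk)]|\leq\|f\|_\infty M_{k,j}$ and $\Var\big[\tr[\mathds{1}_{B_{k,j}}f(\Hwrk)]\big]\leq\|f\|_\infty^2\,E[M_{k,j}^2]$. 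Now $M_{k,j}=\sum_{x\in B_{k,j}}\mathds{1}_{[-M-1,M+1]}(V_x)$ is a sum of independent Bernoulli variables with $p_x:=\P[V_x\in[-M-1,M+1]]$, and inserting $s=M+2$, $t=2M+3$ into the Key Condition \ref{KeyCondi} gives $p_x\leq 2(2M+3)\ax=:c_M\ax$; therefore
\begin{equation*}
E[M_{k,j}^2]=\sum_{x\in B_{k,j}}p_x(1-p_x)+\Big(\sum_{x\in B_{k,j}}p_x\Big)^2\leq c_M\akjv+c_M^2\,\akjvsq .
\end{equation*}
Summing over $j$ and using $\sum_{j}\akjv=\akv$,
\begin{equation*}
\frac{1}{\akvsq}\Var[\trBrk]\leq\|f\|_\infty^2\Big(\frac{c_M}{\akv}+c_M^2\,\frac{1}{\akvsq}\sum_{j=1}^{n^{k-r_k}}\akjvsq\Big),
\end{equation*}
and the right-hand side tends to $0$ because $\akv\to\infty$ and by \ref{rkchoose2}; combined with the two moves above, this proves \ref{WeakCo}.

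The step I expect to be the obstacle is the last one. Since the potential is unbounded there is no pointwise bound of the form $|\tr[\mathds{1}_{B_{k,j}}f(\Hwrk)]|\leq C\akjv$, and a direct resolvent estimate would require a uniform lower bound on $\im\langle\delta_x,(\Hwrk-z)^{-1}\delta_x\rangle$, which does not exist. Weyl's inequality is the device that circumvents this: it trades an a priori uncontrolled eigenvalue count for the number of potential values in a fixed compact window, an explicit sum of independent Bernoulli variables whose first two moments are --- by the Key Condition \ref{KeyCondi} --- of size $\akjv$ and $\akjvsq$, precisely the quantities that Hypothesis $(H)$ renders negligible after the normalizations by $\akv$ and $\akvsq$. (The same scheme can be run on the resolvents $f_z(v)=(z-v)^{-1}$ through Theorem \ref{EQUIV}, part (iv), with a dyadic decomposition of the spectrum replacing the compact support of $f$.) The only remaining point, in the first move, is the operator-norm continuity of $g\mapsto g(\Hwrk)$ and the reduction from $C_0(\R)$ to $C_c^\infty(\R)$, both handled in the usual way via Helffer--Sj\"ostrand and Theorem \ref{EQUIV}.
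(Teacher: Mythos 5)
Your proposal is correct in its overall skeleton and matches the paper's at the structural level, but the heart of the argument is carried out by a genuinely different device. Both you and the paper make the two moves you describe: (i) pass from the full operator to a finite-range $\Hwrk$, paying a deterministic price controlled by \ref{condLap}, and (ii) split the trace over the $n^{k-r_k}$ blocks $B_{k,j}$, which are independent under $\Hwrk$, so the variance factors as a sum over $j$. (The paper phrases move (i) through the resolvent identity for $f_z(v)=(v-z)^{-1}$ and the bound $\|(\Hwrk-z)^{-1}-(\Hwk-z)^{-1}\|\leq |\im z|^{-2}(1-\lambda_{r_k})$, and deals with the pieces $D^{(1)}_{k,\omega}, D^{(3)}_{k,\omega}$; your Helffer--Sj\"ostrand route is a cosmetic variant.) Where you diverge is in bounding the per-block variance. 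The paper's proof estimates $E[W_{k,j}(z)^2]\leq A_{k,j}^2$ by a Minami-type argument: it writes $E[U_xU_y\mid\cF_{x,y}]$ via Krein's formula as an integral against $\rho_x(u)\rho_y(v)$ with the $2\times2$ Green matrix, and applies \ref{KeyCondi} twice to kill the two integrations. You replace this entirely by Weyl's eigenvalue-perturbation inequality, reducing $|\tr[\mathds{1}_{B_{k,j}}f(\Hwrk)]|$ to the counting variable $M_{k,j}=\#\{x\in B_{k,j}: V_x\in[-M-1,M+1]\}$, whose first two moments are controlled directly by \ref{KeyCondi} since $\P[V_x\in[-M-1,M+1]]\lesssim_M \ax$. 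This is genuinely more elementary — no Krein formula, no determinant manipulation — and conceptually cleaner: it isolates the fact that \ref{KeyCondi} is really a Wegner-type bound on the probability of a potential value hitting a window. What you pay for this is a less sharp bound: your per-block estimate is $\lesssim_M A_{k,j}+A_{k,j}^2$ rather than the paper's clean $A_{k,j}^2$, so after normalization you acquire the extra term $c_M/\ak$ and hence need $\ak\to\infty$. This is implicit in the paper (it appears as a standing assumption in Section~\ref{poissson} and in the macro definitions) but is not literally part of Hypothesis $(H)$ as written; you should state it as a hypothesis, or observe that \ref{rkchoose} together with $r_k\to\infty$ forces it. One further trade-off worth noting: the paper's Minami-type computation is not wasted effort, since the identical estimate on $E\det[\,\im G\,]$ is reused verbatim in the proof of Theorem~\ref{Poissonpp} to verify Grigelionis condition \ref{Hip3b}; your Weyl argument gives variance control but not the two-eigenvalue (Minami) estimate, so for the Poisson statistics you would still need the determinant computation.
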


\begin{proof}
In a similar way as \cite{Krit1}, we will prove that for all $z \in \mathbb{C}^+$ the complex valued random variable
\begin{equation}\label{Dkz1}
D_{k,\omega}(z)=\int \frac{d\mu_k^\omega(v)}{v-z}-\int \frac{d\avmu_k(v)}{v-z}
\end{equation}
converges almost surely to zero. First of all, we write  for $r_k< k$
\begin{eqnarray}\label{Dkz2}
D_{k,\omega}(z)&=&
\Dkw{1}(z)+\Dkw{2}(z)+\Dkw{3}(z),
\end{eqnarray}
where
\begin{eqnarray}
\Dkw{1}(z)&=&\frac{1}{\COEF}\left\{ \trzBk-\trzBrk\right\}\label{DKW1}\\
\Dkw{2}(z)&=&\frac{1}{\COEF}\left\{ \trBrk-E[\trzBrk]\right\}\label{DKW2}\\
\Dkw{3}(z)&=&\frac{1}{\COEF}\left\{ E[\trzBrk]-E[\trzB]\right\}\label{DKW3}
\end{eqnarray}
here $f_z(v)=(v-z)^{-1}$ and $r_k$, $k \geq 1$ is the same sub-sequence of the Hypothesis $(H)$ . In order to get upper bounds for $\Dkw{1}(z)$ and $\Dkw{3}(z)$, we use the resolvent formula
\begin{equation}
(\Hw_{s-1}-z)^{-1}-(\Hw_s-z)^{-1}=p_s(\Hw_{s-1}-z)^{-1}E_s(\Hw_s-z)^{-1}.
\end{equation}
Taking the sum over $r_k<s\leq k$
\begin{equation}
(\Hwrk-z)^{-1}-(\Hwk-z)^{-1}=-\sum_{s=r_k+1}^{k}p_s(\Hw_{s-1}-z)^{-1}E_s(\Hw_s-z)^{-1},
\end{equation}
thereby
\begin{equation}\label{desresol2}
||(\Hwrk-z)^{-1}-(\Hwk-z)^{-1}||\leq\frac{1}{|\im(z)|^2}\sum_{s=r_k+1}^{k}p_s.
\end{equation}
We emphasize that the previous inequality is obtained since for all  $s \geq 1$  
$$||\Hw_s-z||\leq \frac{1}{|\im(z)|} ,\hspace{1cm} ||E_s||=1.$$ 
The equation \ref{desresol2} directly implies
\begin{eqnarray}\label{Bound1}
|\Dkw{1}(z)|&\leq& \frac{1}{\akv}\sum_{x \in B_k}\sum_{s=r_k+1}^{k}\frac{1}{|\im(z)|^2}p_s\\
&=&	\frac{1}{|\im(z)|^2}\left( \frac{|B_k|}{\akv}\LArk\right).
\end{eqnarray}
Using the same argument, it follows
\begin{eqnarray}\label{Bound3}
|\Dkw{3}(z)|&\leq& 	\frac{1}{|\im(z)|^2}\left( \frac{|B_k|}{\akv}\LArk\right)
\end{eqnarray}
and consequently
\begin{equation}
 \max\{ |\Dkw{1}(z)|, |\Dkw{3}(z)|\}  \leq \frac{1}{|\im(z)|^2} \left( \frac{|B_k|}{\akv}\LArk\right).
\end{equation}
We will get  an estimate for  $E[(\Dkw{2}(z))^2]$. First of all, we set
$$\Uxo=\im (\UHz{x}).$$
Since $B_k$ is the union of $n^{k-r_k}$ disjoint balls with volume $|B_{r_k}|$, the random variable $\Dkw{2}(z)$ can be written  as 
\begin{equation}
\Dkw{2}(z)=\frac{1}{\akv}\sum_{j=1}^{n^{k-r_k}}\Zkj
\end{equation}
where $\Zkj$, $1 \leq j\leq n^{k-r_k}$ is a collection of independent random variables  defined as
\begin{equation}\label{Defkj}
\Zkj=\sum_{x \in B_{k,j}}\Uxo .
\end{equation} 
 From the independence of $\Zkj$ we have
\begin{equation}
E[(\Dkw{2}(z))^2]=\frac{1}{\akv^2}\sum_{j=1}^{n^{k-r_k}}E[\Zkj^2].
\end{equation}
The following arguments are inspired i in the original proof for lemma 2 of Minami \cite{Minami1}. We observe first 
\begin{eqnarray}\label{Inep2}
E[\Zkj^{2}]&=& \sum_{x \in B_{k,j}}\sum_{y \in B_{k,j}}E[\Uxo \Uyo]\\
&=& \sum_{x \in B_{k,j}}\sum_{y \in B_{k,j}}  E \det \begin{pmatrix}
\im\left\langle\delta_{x} , (\Hwrk-z)^{-1}\delta_{x}\right\rangle & \im\left\langle\delta_{x} , (\Hwrk-z)^{-1}\delta_{y}\right\rangle \\
\im\left\langle\delta_{y} , (\Hwrk-z)^{-1}\delta_{x}\right\rangle	& \im\left\langle\delta_{y} , (\Hwrk-z)^{-1}\delta_{y}\right\rangle
\end{pmatrix} .\nonumber
%&=& \left(\sum_{x \in B_{k,j}} E[\Uxo^2]^{1/2} \right)^{2}
\end{eqnarray}
By calling $G_{\Hwrk}(x,y;z)=\langle\delta_{x} , (\Hwrk-z)^{-1}\delta_{y}\rangle$ from the Krein's  formula we know that for $z \in \C^+$
\begin{equation}
\begin{pmatrix}
G_{\Hwrk}(x,x;z) & G_{\Hwrk}(x,y;z)\\
G_{\Hwrk}(y,x;z)	& G_{\Hwrk}(y,y;z)
\end{pmatrix}  =\left(\begin{pmatrix}
\Vw_x
&0\\0&\Vw_y
\end{pmatrix}+
\begin{pmatrix}
\SG_{{\tHwrk}}(x,x;z) & \SG_{ \tHwrk}(x,y;z)\\
\SG_{ \tHwrk}(y,x;z)	& \SG_{ \tHwrk}(y,y;z)
\end{pmatrix}^{-1} \right)^{-1} \nonumber
\end{equation}
where ${\tHwrk}=\Hwrk-V_x \delta_x -V_y \delta_y$ do not depend on $\Vw_x,\Vw_y$. To simplify notation we write
\begin{equation}
V(\Vw_x,\Vw_y)=\begin{pmatrix}
\Vw_x
&0\\0&\Vw_y
\end{pmatrix}\hspace{1cm}
\tilde{G}(z)=\begin{pmatrix}
\SG_{{\tHwrk}}(x,x;z) & \SG_{ \tHwrk}(x,y;z)\\
\SG_{ \tHwrk}(y,x;z)	& \SG_{ \tHwrk}(y,y;z)
\end{pmatrix}^{}.
\end{equation}
From a direct computation we deduce
\begin{equation}\label{CopyMin}
\Uxo \Uyo= \frac{\det \im(\tilde{G}(z))}{|\det \tilde{G}(z)|^2}\frac{1}{|V(\Vw_x,\Vw_y)-\tilde{G}(z)^{-1})|^2}.
\end{equation}
It is a well known fact that 
$$E[\Uxo \Uyo]=E[E[\Uxo \Uyo|\cF_{x,y}]]$$
where $\cF_{x,y}$ is the sigma algebra generated by all the random variables excluding  $V_x,V_y$. By using  equation \ref{CopyMin} we can write 
\begin{equation}\label{CopyMin}
E[\Uxo \Uyo|\cF_{x,y}]= \frac{\det \im(\tilde{G}(z))}{|\det \tilde{G}(z)|^2} \int_\R \int_\R \frac{\denx(u)\deny(v)}{|\det(V(u,v)-\tilde{G}(z)^{-1})|^2}dv du.
\end{equation}
We claim that under assumption \ref{KeyCondi}
\begin{equation}\label{ClaimMin}
E[\Uxo \Uyo|\cF_{x,y}]\leq a_x a_y.
\end{equation}
If the claim is true, we have $E[\Zkj^{2}]\leq \akjvsq$ and consequently
\begin{equation}\label{Bound2}
E[(\Dkw{2}(z))^2]\leq \frac{1}{\akvsq}\sum_{j=1}^{n^{k-r_k}}\akjvsq.
\end{equation}
 To prove the claim, we first set
 \begin{equation}
\tilde{G}(z)=\begin{pmatrix}
a_1&b_1\\b_1&c_1
\end{pmatrix}+i\begin{pmatrix}
a_2&b_2\\b_2&c_2
\end{pmatrix}
 \end{equation}
(here  all the coefficients $a_i,b_i,c_i$ $i=1,2$ are real numbers). This notation allows us to write the integral at the right hand of \ref{CopyMin} as
\begin{equation}\label{InteMin} 
I_{x,y}= \int_\R \frac{\denx(u)}{|u+ia_2|^2}\int_\R \frac{\deny(v)}{\left|v+\left( ic_2-\frac{(b_1+ib_2)^2}{u+ia_2}\right)\right|^2}dv du.
\end{equation}
We use twice  the assumption \ref{KeyCondi} to get
\begin{eqnarray}\label{InteMin2} 
I_{x,y}&\leq& a_y \int_\R \frac{\denx(u)}{|u+ia_2|^2} \frac{1}{\im \left( ic_2-\frac{(b_1+ib_2)^2}{u+ia_2}\right)} du\\
&=&a_y \frac{1}{c_2}\int_\R \frac{\denx(u)}{ \left( u-\frac{b_1b_2}{c_2}\right)^2+\frac{1}{c_2^2}\Delta (b_1^2+b_2^2+\Delta)} du\\
&\leq& a_x a_y\frac{1}{\sqrt{\Delta (b_1^2+b_2^2+\Delta)}}\\
&\leq&\frac{a_x a_y}{\Delta},
\end{eqnarray}
 where $\Delta=|a_2c_2-b_2^2|>0$.  To obtain \ref{ClaimMin} we observe that
$\Delta \geq \frac{|\det \im(\tilde{G}(z))|}{|\det \tilde{G}(z)|^2}$.
Finally, from  inequalities \ref{Bound1}, \ref{Bound3} and \ref{Bound2} we get
\begin{equation}\label{KeyIneq}
E[|D_{k,\omega}|^2]\leq \left(\frac{2}{|\im(z)|^2}  \frac{|B_k|}{\akv}\LArk\right)^2+\frac{1}{\akvsq}\sum_{j=1}^{n^{k-r_k}}\akjvsq
\end{equation}
concluding the desired result from  assumptions \ref{condLap} and \ref{rkchoose2} of Hypothesis $(H)$ and letting $k \lra \infty$
\end{proof}
The following theorem states that under some additional conditions,  we also get the almost sure convergence.
\begin{theorem}\label{StrongConv}
	Under assumptions of Theorem \ref{WeakConv}, if
	\begin{eqnarray}
	\sum_{k \geq 1} \frac{1}{\akesq}\sum_{k=1}^{n^{k-r_k}}\akjesq&<&\infty\\
	\sum_{k \geq 1} \left(\frac{B_k}{\ak}  \LArk\right)^p&<&\infty \hspace{1cm}\textrm{for some $p > 1$} \label{summable},
	\end{eqnarray}
	then $$P\left[\omega \in \Omega :\lim_{k\rightarrow  \infty}\left|\frac{1}{\COEF}\left(\trB-\EtrB\right)\right|=0\right]=1.$$
\end{theorem}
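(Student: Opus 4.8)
The plan is to upgrade the $L^2$-convergence of Theorem \ref{WeakConv} to almost sure convergence via a standard Borel--Cantelli argument applied along the decomposition $D_{k,\omega}(z)=\Dkw{1}(z)+\Dkw{2}(z)+\Dkw{3}(z)$ already established. Fix $z\in\C^+$. Recall from \ref{Bound1} and \ref{Bound3} that $\max\{|\Dkw{1}(z)|,|\Dkw{3}(z)|\}\leq |\im(z)|^{-2}\,(B_k/\akv)\LArk$ deterministically, and from \ref{Bound2} that $E[(\Dkw{2}(z))^2]\leq \akvsq^{-1}\sum_{j=1}^{n^{k-r_k}}\akjvsq$. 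The first summability hypothesis controls $\Dkw{2}(z)$: by Chebyshev, for any $\varepsilon>0$,
\begin{equation}
\sum_{k\geq 1}\P\big[\,|\Dkw{2}(z)|>\varepsilon\,\big]\leq \frac{1}{\varepsilon^2}\sum_{k\geq 1}\frac{1}{\akesq}\sum_{j=1}^{n^{k-r_k}}\akjesq<\infty,
\end{equation}
so Borel--Cantelli gives $\Dkw{2}(z)\to 0$ almost surely. For the deterministic terms $\Dkw{1}(z)$ and $\Dkw{3}(z)$, convergence to zero is immediate once we know $(B_k/\akv)\LArk\to 0$, which follows from \ref{summable} since a summable sequence (even of $p$-th powers) tends to zero. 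Hence $D_{k,\omega}(z)\to 0$ almost surely for each fixed $z\in\C^+$.

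The next step is to remove the dependence of the null set on $z$. Take a countable dense subset $\cD\subset\C^+$; then there is a single event of full probability on which $D_{k,\omega}(z)\to 0$ for every $z\in\cD$ simultaneously. To pass to all of $\C^+$ one uses that the functions $z\mapsto \int (v-z)^{-1}d\mu_k^\omega(v)$ and $z\mapsto \int (v-z)^{-1}d\avmu_k(v)$ are Herglotz (uniformly Lipschitz on compact subsets of $\C^+$ with a bound depending only on $\im z$, since $|\partial_z (v-z)^{-1}|=|v-z|^{-2}\leq |\im z|^{-2}$ and the measures are probability measures), so pointwise convergence on a dense set plus this equicontinuity yields locally uniform convergence $D_{k,\omega}(z)\to 0$ on $\C^+$, on the same full-probability event. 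By the equivalence (i)$\Leftrightarrow$(iv) of Theorem \ref{EQUIV}, this means $\mu_k^\omega\to\avmu^{av}$ weakly for almost every $\omega$; in particular $\trB/\COEF - \EtrB/\COEF\to 0$. The only point requiring slight care is that Theorem \ref{EQUIV} is phrased for probability measures, whereas $\mu_k^\omega$ and $\avmu_k$ are the unnormalised traces divided by $\COEF=\akv$; one rescales by noting $\akv^{-1}\trB$ and $\akv^{-1}\EtrB$ have the same total mass in the limit (this is exactly the content of \ref{Inten} together with Hypothesis $(H)$), so the difference being a signed measure of vanishing total variation against $C_0$ functions follows from the Herglotz convergence above applied to the difference directly, without ever normalising.

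I expect the main obstacle to be the second point: carefully justifying the exchange from ``$D_{k,\omega}(z)\to 0$ for each fixed $z$ a.s.'' to ``a.s., $D_{k,\omega}(z)\to 0$ for all $z$, hence weak convergence.'' The Borel--Cantelli step is routine given the hypotheses, and the bounds on $\Dkw{1}$, $\Dkw{3}$ are already in hand from the proof of Theorem \ref{WeakConv}; what needs to be written with some care is the Vitali/Montel-type argument (normal families of Herglotz functions, or equivalently an explicit equicontinuity estimate) that promotes countable-dense convergence to locally uniform convergence, and then the invocation of Theorem \ref{EQUIV} to conclude the stated limit. This is the same mechanism as in Kritchevski \cite{Krit1}, so it should be possible to keep the argument brief by citing that the passage from resolvent convergence on a dense subset of $\C^+$ to weak convergence of the associated measures is standard.
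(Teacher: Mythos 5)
Your core argument matches the paper's proof: decompose $D_{k,\omega}(z)$ into $\Dkw{1}+\Dkw{2}+\Dkw{3}$, use the deterministic resolvent bound $|\Dkw{1}(z)|+|\Dkw{3}(z)|\leq 2|\im z|^{-2}(|B_k|/\akv)\LArk$, and apply Chebyshev plus Borel--Cantelli with the second-moment bound on $\Dkw{2}(z)$. The paper packages this as $P[\Omega_\delta^c]=0$ with $\Omega_\delta^c=\bigcap_{K}\bigcup_{k\geq K}\{|D_{k,\omega}|\geq\delta\}$ and then intersects over $\delta=1/m$; for the $\Dkw{1}+\Dkw{3}$ piece it goes through a Markov inequality with the $p$-th moment, which is logically identical to your observation that a sequence with summable $p$-th powers tends to zero (the Markov step is redundant since the bound is nonrandom).

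One small wording slip: you call $\Dkw{1}(z)$ ``deterministic'', but it is a genuine random variable (it involves traces of resolvents of $\Hwk$ and $\Hwrk$, both of which contain $\Vw$). What is deterministic is the upper bound \ref{Bound1}, and that is all your argument uses, so the conclusion is still correct.

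Your second paragraph is the real added value. The paper shows $D_{k,\omega}(z)\to 0$ a.s.\ for fixed $z$ and then states the theorem for general $f\in C_0(\R)$ without spelling out how the exceptional null set, which a priori depends on $z$, is removed. Your Herglotz-equicontinuity argument (countable dense $\cD\subset\C^+$, uniform Lipschitz bound $|\partial_z(v-z)^{-1}|\leq|\im z|^{-2}$ on compacts, hence locally uniform convergence on $\C^+$, then invoke Theorem \ref{EQUIV}) is the correct and standard way to close that gap, and it is indeed the mechanism implicit in Kritchevski's treatment. You are right that this is the only step requiring real care; everything else is a direct transcription of the bounds already proved for Theorem \ref{WeakConv}.
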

\begin{proof}
	We will prove that for all $\delta>0$ the event
	\begin{equation}
	\Omega_\delta=\left\{ \omega \in \Omega : \limsup |D_{k,\omega}|<\delta \right\}
	\end{equation}
	has probability one. From the obvious relation $P[\Omega_\delta]=1-P[\Omega_\delta^c]$, we only have to show that $P[\Omega_\delta^c]=0$, where
		\begin{equation}
	\Omega_\delta^c=\left\{ \omega \in \Omega : \bigcap_{K \geq 1}\bigcup_{k\geq K} |D_{k,\omega}|\geq \delta \right\}.
	\end{equation}
	It is a well known fact that
	\begin{eqnarray}
	P[\Omega_\delta^c]&=&\lim_{K\rightarrow \infty}P\left[\bigcup_{k\geq K} |D_{k,\omega}|\geq \delta\right]\\
	&\leq &\lim_{K\rightarrow \infty}\sum_{k \geq K}P\left[ |D_{k,\omega}|\geq\delta\right].
	\end{eqnarray}
From the Chebyshev's inequality, we have for all $\delta>0$
	\begin{eqnarray*}
P\left[\bigcup_{k\geq K} |D_{k,\omega}(z)|\geq \delta\right]&\leq& P\left[\bigcup_{k\geq K} |\Dkw{1}(z)+\Dkw{3}(z)|\geq \frac{\delta}{2}\right]+P\left[\bigcup_{k\geq K} |\Dkw{2}(z)|\geq \frac{\delta}{2}\right]\\
&\leq &\sum_{k \geq K}\frac{E[|\Dkw{1}(z)+\Dkw{3}(z)|^p]}{(\delta/2)^p}+\sum_{k \geq K}\frac{E[D_{k,\omega}(z)^2]}{(\delta/2)^2}\\
&\leq& \frac{1}{(\delta/2)^{\min\{2,p\}}}\sum_{k \geq K}\left[\left(\frac{2}{|\im(z)|^2}  \frac{|B_k|}{\akv}\LArk\right)^p+\frac{1}{\akvsq}\sum_{j=1}^{n^{k-r_k}}\akjvsq\right].
	\end{eqnarray*}
	If assumption \ref{summable} is fulfilled, then $\lim_{K\rightarrow \infty}\sum_{k \geq K}P\left[ |D_{k,\omega}(z)|\geq\delta\right]=0$ and consequently $P[\Omega_\delta]=1$. The proof concludes by taking the event $\tilde{\Omega}=\bigcap_{m \geq 1} \Omega_{1/m}$.
		\end{proof}
	
The last theorem of this section states that  the spectral measurement is absolutely continuous with respect to the Lebesgue measure
\begin{theorem}\label{ABSCONT}
Under the main assumptions of Theorem  \ref{WeakConv}, the expected spectral measure $\avmu$ is absolutely continuous respect to the Lebesgue measure, i.e. there is a density function  $\eta$ such that for all  $\BS \in \cB(\R)$
\begin{equation}
\avmu(\BS) = \int_\BS \eta(e)de
\end{equation}
and moreover $||\eta||_\infty<\infty$.
\end{theorem}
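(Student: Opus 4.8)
The plan is to reduce the statement, one site at a time, to a rank‑one perturbation and to recognize the expected diagonal resolvent as a Poisson integral of the single‑site density — this is the one‑site instance of the $2\times2$ computation \ref{InteMin}--\ref{InteMin2} already carried out in the proof of Theorem \ref{WeakConv}.

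Fix $x_0\in\X$ and let $\cF_{x_0}$ be the $\sigma$-algebra generated by $\{V_y:y\neq x_0\}$. I would split $\Hw=\tilde H_{x_0}+V_{x_0}|\delta_{x_0}\rangle\langle\delta_{x_0}|$ with $\tilde H_{x_0}$ self-adjoint on $\cD$ and $\cF_{x_0}$-measurable, write $\tilde G(z)=\langle\delta_{x_0},(\tilde H_{x_0}-z)^{-1}\delta_{x_0}\rangle=\alpha+i\beta$ for $z=s+it\in\C^+$ (so $\beta=t\|(\tilde H_{x_0}-\bar z)^{-1}\delta_{x_0}\|^2>0$), and use Krein's rank-one formula $\langle\delta_{x_0},(\Hw-z)^{-1}\delta_{x_0}\rangle=\tilde G(z)/(1+V_{x_0}\tilde G(z))$, which gives $\im\langle\delta_{x_0},(\Hw-z)^{-1}\delta_{x_0}\rangle=\beta\,[(1+V_{x_0}\alpha)^2+V_{x_0}^2\beta^2]^{-1}$. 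Taking the conditional expectation over $V_{x_0}$ (i.e.\ integrating against $\denx$ in $v=V_{x_0}$) and completing the square in $v$ as in \ref{InteMin}--\ref{InteMin2}, I would obtain
\begin{equation*}
E\!\left[\,\im\langle\delta_{x_0},(\Hw-s-it)^{-1}\delta_{x_0}\rangle\;\middle|\;\cF_{x_0}\right]=\int_\R\denx(v)\,\frac{t^\ast}{(v-s^\ast)^2+(t^\ast)^2}\,dv\;\leq\;a_{x_0},
\end{equation*}
with $s^\ast=-\alpha/(\alpha^2+\beta^2)$ and $t^\ast=\beta/(\alpha^2+\beta^2)>0$, the last inequality being assumption \ref{KeyCondi}. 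Averaging over $\cF_{x_0}$ then yields $E[\,\im\langle\delta_{x_0},(\Hw-s-it)^{-1}\delta_{x_0}\rangle\,]\leq a_{x_0}$ for all $s\in\R$, $t>0$.

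To turn this uniform bound into absolute continuity, I would use that $\tfrac1\pi\im\langle\delta_{x_0},(\Hw-s-it)^{-1}\delta_{x_0}\rangle$ is the value at $s$ of the density of $P_t\ast\mu_{x_0}$ (with $P_t(u)=\tfrac1\pi\,t/(u^2+t^2)$ the Poisson kernel and $\mu_{x_0}$ the random spectral measure at $x_0$); taking expectations and applying Fubini, $\tfrac1\pi E[\,\im\langle\delta_{x_0},(\Hw-s-it)^{-1}\delta_{x_0}\rangle\,]$ is the density of $P_t\ast\avmu_{x_0}$, so the previous bound reads $(P_t\ast\avmu_{x_0})(B)\leq(a_{x_0}/\pi)\,|B|$ for every Borel $B$ and every $t>0$. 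Since $P_t\ast\avmu_{x_0}\to\avmu_{x_0}$ weakly as $t\to0^+$, the portmanteau theorem gives $\avmu_{x_0}(U)\leq(a_{x_0}/\pi)\,|U|$ for open $U$, and outer regularity of $\avmu_{x_0}$ and of Lebesgue measure extends this to all Borel sets; hence $\avmu_{x_0}$ is absolutely continuous with density $\eta_{x_0}$, $\|\eta_{x_0}\|_\infty\leq a_{x_0}/\pi<\infty$. (The Javrjan--Kotani spectral averaging identity $\int_\R\langle\delta_{x_0},\mathds{1}_I(\tilde H_{x_0}+v|\delta_{x_0}\rangle\langle\delta_{x_0}|)\delta_{x_0}\rangle\,dv=|I|$, combined with $\|\denx\|_\infty\leq a_{x_0}/\pi$ — the $t\to0^+$ limit of \ref{KeyCondi} — gives the same conclusion more quickly; I keep the resolvent route since it stays inside the paper's machinery.) Finally, for the averaged measure $\avmu_k=\tfrac1{\akv}\sum_{x\in B_k}\avmu_x$ one has $\|\eta_k\|_\infty\leq\tfrac1{\akv}\sum_{x\in B_k}a_x/\pi=1/\pi$ because $\akv=\sum_{x\in B_k}a_x$, uniformly in $k$, so the same weak-limit argument passes the bound to any limit $\avmu$, giving $\|\eta\|_\infty\leq1/\pi$.

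I expect the one real obstacle to be the algebraic step in the displayed identity: after completing the square one must check the $v$-integrand is \emph{exactly} a Poisson kernel, so that \ref{KeyCondi} applies with the (real, possibly negative) argument $s^\ast$. This is, however, precisely the one-variable specialization of \ref{InteMin}--\ref{InteMin2}. The remaining ingredients — Fubini for the convolution/expectation interchange, the weak convergence $P_t\ast\avmu_{x_0}\to\avmu_{x_0}$, and the portmanteau-plus-outer-regularity promotion of the density bound to all Borel sets and through the limit — are standard.
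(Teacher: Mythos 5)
Your proposal follows essentially the same route as the paper: a rank-one (Krein) decoupling of a single site, conditional expectation over $V_{x_0}$ producing a Poisson-kernel integral of $\denx$, and then assumption \ref{KeyCondi} to bound this by $a_{x_0}$, after which averaging over $B_k$ and normalizing by $\akv$ gives the uniform Poisson-integral bound on $\avmu$. The only difference is cosmetic: where the paper passes from that bound to absolute continuity by citing Lemma~4.2 of \cite{Aize1}, you spell out that step via convolution with the Poisson kernel, weak convergence, the portmanteau inequality, and outer regularity (and note the alternative spectral-averaging shortcut), which is a correct unpacking of the cited lemma rather than a different argument.
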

\begin{proof}
We have to prove that   there is a density function $\eta$ such that all $f \in C_0(\R)$
\begin{equation}\label{muav}
\int_\R f(e) d\avmu(e)=\lim_{k \rightarrow \infty}\frac{1}{\COEF}\trB.%=\lim_{k \rightarrow. \infty}\frac{1}{\COEF}\EtrB.
\end{equation}
Since the limit exists, we only have to prove that the limiting measure is absolutely continuous, which is equivalent to prove that for all $z \in \mathbb{C}^{+}$ 
\begin{equation}\label{AbsCon}
\frac{1}{\pi}\im \int \frac{\avmu(dv)}{v-z}\leq 1
\end{equation}
(see for instance lemma 4.2 of \cite{Aize1}). From the one-rank perturbation formula, we have for all $x \in \X$ and $f(t)=(t-z)^{-1}$, $z \in \C^+$
\begin{equation}
\im \tx{x} =\im \frac{1}{V_x-\Sigma_0(z)},
\end{equation}
where $\Sigma_0(z)=\txo{x}$, $\Hw_x=\Hw-V_x\delta_x$. By taking conditional expectation we get
\begin{eqnarray}
E\left[\im \frac{1}{V_x-\Sigma_0(z)}\right]&=&E\left[E\left[\im \frac{1}{V_x-\Sigma_0(z)}|\cF_x\right]\right]
\end{eqnarray}
where $\cF_x$ is the sigma algebra generated by all the random variables excluding  $\Vw_x$. From  the assumption \ref{KeyCondi} we get
\begin{eqnarray}
E\left[\im \frac{1}{V_x-\Sigma_0(z)}|\cF_x\right]&=&\int_{\R}\frac{\im \Sigma_0(z)}{(u-\re \Sigma_0(z))^2+\im \Sigma_0(z)^2}\denx(u)du\\
&\leq&\ax.
\end{eqnarray}
We take the sum over $x \in B_k$ to obtain for all $k \geq 1$
\begin{equation}\label{muavb}
\frac{1}{\COEF}\EtrB\leq 1,
\end{equation}
concluding the proof by letting $k \rightarrow \infty$.
\end{proof}

\section{ Poisson Statistics}\label{POISSTA}
%\subsection{Main Theorems}
\begin{theorem}\label{Poissonpp}
Suppose that density of the random variable $\Vw_x$ satisfies  
\begin{equation}\label{MomCond}
\KeyCond
\end{equation}
for some sequence $\ax$, $x \in \X$ satisfying the hypothesis $(H)$  and the hierarchical Laplacian satisfies
 \begin{equation}\label{LapCondi}
\lim_{k \rightarrow \infty} {\akv |B_{k}|}\LArk=0.
\end{equation} 
We  also assume
\begin{equation}\label{CONDREG2}
\lim_{\varepsilon\rightarrow 0^+}\im \int_\R\frac{1}{v-e-i\varepsilon }\eta(v)dv=\pi \eta(e),
\end{equation}
where $\eta$ is the density of the expected spectral measure \ref{muav}. If the above conditions are fulfilled, then the  rescaled eigenvalue counting measure 
$$ \xik^\omega=\sum_{i=1}^{|B_k|}\delta(\ak(e_k^{\omega,e}-e))$$
 converges to a Poisson point process with intensity  $\eta(e)$.
\end{theorem}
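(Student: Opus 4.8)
The plan is to run the successive--approximations argument of Minami \cite{Minami1}, in the form used by Kritchevski \cite{Krit1}, while tracking the non i.i.d.\ potential through the sequence $\ax$. The process $\xik^\omega$ is attached to $\Hwk$, for which $\ell^2(B_k)$ carries no independence, so the first task is to replace it by the process attached to the finite--range operator $\Hwrk$: since $B_k$ is the disjoint union of the $n^{k-r_k}$ balls $B_{k,j}$ and each $\ell^2(B_{k,j})$ is invariant under $\Hwrk$, the rescaled counting measure of $\Hwrk$ on $B_k$ is a superposition $\txik^\omega=\sum_{j=1}^{n^{k-r_k}}\txikj^\omega$ of \emph{independent} point processes, $\txikj^\omega$ being the rescaled counting measure of $\Hwrk$ restricted to $\ell^2(B_{k,j})$. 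Once $\xik^\omega$ and $\txik^\omega$ are shown to have the same weak limit, the remaining task is to check hypotheses \ref{Hip1}--\ref{Hip3} of Grigelionis' Theorem \ref{grige} for $\txik^\omega$, with limiting intensity $\nu(I)=\eta(e)\,|I|$.

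\emph{Truncation.} On the finite--dimensional space $\ell^2(B_k)$ the operators $\Hwk$ and $\Hwrk$ differ by $\sum_{s=r_k+1}^{k}p_sE_s$, of norm at most $\LArk$, so by Weyl's inequality the $i$-th eigenvalue of $\Hwk|_{B_k}$ and of $\Hwrk|_{B_k}$ differ by at most $\LArk$. For $f\in C_0(\R)$, $f\ge 0$, compactness of $\operatorname{supp}f$ and a mean--value estimate give
\[
\bigl|\xik^\omega(f)-\txik^\omega(f)\bigr|\le \|f'\|_\infty\,\ak\LArk\;\tr\bigl[\mathds{1}_{B_k}\mathds{1}_{J_k}(\Hwk)\bigr],
\]
where $J_k$ is a fixed dilate of $[e-\ak^{-1},e+\ak^{-1}]$. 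The a priori bound $E[\im\langle\delta_x,(\Hwk-z)^{-1}\delta_x\rangle]\le\ax$, proved exactly as in Theorem \ref{ABSCONT}, shows $\avmu_{k,x}$ has density $\le\ax$, hence $E\bigl[\tr[\mathds{1}_{B_k}\mathds{1}_J(\Hwk)]\bigr]\le\ak|J|$ for every bounded interval; with $|J_k|=O(\ak^{-1})$ this is bounded uniformly in $k$, and therefore $E\bigl|\xik^\omega(f)-\txik^\omega(f)\bigr|=O(\ak\LArk)\to 0$ by \ref{LapCondi}. (Equivalently one may compare the resolvents through \ref{desresol2} at a spectral parameter with imaginary part of order $\ak^{-1}$, which is the route in which the factor $|B_k|$ of \ref{LapCondi}, coming from the trace, appears.) Since $|e^{-a}-e^{-b}|\le|a-b|$ for $a,b\ge 0$, the Laplace functionals of $\xik^\omega$ and $\txik^\omega$ have the same limit, so it suffices to treat $\txik^\omega$.

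\emph{Grigelionis' conditions.} Put $J=e+I/\ak$ and $\akj=\sum_{x\in B_{k,j}}\ax$. From $\P[\txikj^\omega(I)\ge 1]\le E[\txikj^\omega(I)]=\sum_{x\in B_{k,j}}\avmu_{r_k,x}(J)\le\akj|I|/\ak$ and \ref{rkchoose} of Hypothesis $(H)$ one gets $\max_j\P[\txikj^\omega(I)\ge 1]\to 0$, which is \ref{Hip1}. For \ref{Hip3} I use the Minami--type estimate \ref{ClaimMin}, $E[U_xU_y\mid\cF_{x,y}]\le\ax a_y$ with $U_x=\im\langle\delta_x,(\Hwrk-z)^{-1}\delta_x\rangle$, established within the proof of Theorem \ref{WeakConv}; summing over $x,y\in B_{k,j}$ and using that $\sum_{x\in B_{k,j}}U_x$ equals $\sum_n\im z\,[(\lambda_n-\re z)^2+(\im z)^2]^{-1}$ over the eigenvalues $\lambda_n$ of $\Hwrk|_{B_{k,j}}$, a Stieltjes--inversion argument with $z=e+i|I|/\ak$ turns it into $E\bigl[\txikj^\omega(I)(\txikj^\omega(I)-1)\bigr]\le\Const\,(\akj/\ak)^2|I|^2$; hence $\P[\txikj^\omega(I)\ge 2]\le E[\txikj^\omega(I)(\txikj^\omega(I)-1)]$ and $\sum_j\P[\txikj^\omega(I)\ge 2]\to 0$ by \ref{rkchoose2}. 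Combining these, $\sum_j\P[\txikj^\omega(I)\ge 1]=E[\txik^\omega(I)]+o(1)$; by the truncation step $E[\txik^\omega(I)]=E[\xik^\omega(I)]+o(1)=\ak\,\nu_k(e+I/\ak)+o(1)$, where $\nu_k=\ak^{-1}\sum_{x\in B_k}\avmu_{k,x}$ converges weakly to $\avmu$ (Theorems \ref{WeakConv} and \ref{EQUIV}) and $\avmu$ has bounded density $\eta$ (Theorem \ref{ABSCONT}, formula \ref{muav}). Passing to Stieltjes transforms $m_k(z)=\int(v-z)^{-1}d\nu_k(v)\to m(z)=\int(v-z)^{-1}\eta(v)\,dv$ and invoking the regularity hypothesis \ref{CONDREG2}, $\lim_{\varepsilon\downarrow 0}\im m(e+i\varepsilon)=\pi\eta(e)$, together with $\|\eta\|_\infty<\infty$, one identifies $\lim_k\ak\,\nu_k(e+I/\ak)=\eta(e)\,|I|$, which is \ref{Hip2}. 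Theorem \ref{grige} then gives $\txik^\omega$, hence $\xik^\omega$, converging to a Poisson point process with intensity $\eta(e)$.

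\emph{The main obstacle.} The delicate point is the identification of the intensity in \ref{Hip2}: one must interchange the volume limit $k\to\infty$ with the $\varepsilon\downarrow 0$ limit implicit in the Poisson--kernel representation of $\ak\,\nu_k(e+I/\ak)$. The correct order is to sandwich $\ak\,\nu_k(e+I/\ak)$ between quantities $c_\pm\,\im m_k(e+i\varepsilon_k)$ with $\varepsilon_k$ of order $|I|/\ak$, to control the tail of $\im m_k(e+i\varepsilon_k)$ uniformly in $k$ --- this is exactly where the density bound $\le\ax$, hence $\|\eta\|_\infty<\infty$, is used --- and only then replace $m_k$ by $m$ and let $\varepsilon_k\downarrow 0$ via \ref{CONDREG2}. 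The same Poisson--kernel bookkeeping applied to the second moment is what turns the abstract bound \ref{ClaimMin} into the effective Minami estimate for $E[\txikj^\omega(I)(\txikj^\omega(I)-1)]$; keeping these estimates uniform in $j$ and $k$ while simultaneously tracking the spectral scale $\ak^{-1}$ and the decay $\LArk$ is the technical heart of the argument.
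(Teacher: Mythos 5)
Your plan is sound and follows the same Minami--Grigelionis skeleton as the paper: replace $\xik^\omega$ by the superposition $\txik^\omega$ of independent processes attached to $\Hwrk$ restricted to the subspaces $\ell^2(B_{k,j})$, verify the three Grigelionis conditions, and identify the intensity through \ref{CONDREG2}. The execution differs in two steps. For the truncation, the paper works throughout with the test functions $f_{z_k}(v)=\im(v-z_k)^{-1}$, $\im z_k\sim\akv^{-1}$, and the resolvent telescoping \ref{desresol2}, which is where the factor $|B_k|$ in \ref{LapCondi} enters; your primary route (Weyl perturbation of eigenvalues by at most $\LArk$, a mean--value estimate, and the Wegner-type bound $E\bigl[\tr[\mathds{1}_{B_k}\mathds{1}_{J_k}(\Hwk)]\bigr]\leq\pi^{-1}\akv|J_k|$ obtained from the density estimate of Theorem~\ref{ABSCONT}) needs only $\akv\LArk\to 0$, so it is a valid and in fact slightly cheaper variant for this step, although the full \ref{LapCondi} is still required for the intensity. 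For the intensity, the paper compares $\frac{1}{\akv}\sum_{x\in B_k}E[\im\langle\delta_x,(\Hwrk-z_k)^{-1}\delta_x\rangle]$ to the corresponding infinite-volume quantity for $\Hw$ and then invokes \ref{CONDREG2}; you instead route through $E[\xik^\omega(I)]=\akv\nu_k(e+I/\akv)$ with $\nu_k=\akv^{-1}\sum_{x}\avmu_{k,x}$. As you correctly flag, weak convergence $\nu_k\to\avmu$ alone does not give the diagonal limit $\akv\nu_k(e+I/\akv)\to\eta(e)|I|$; your sandwich by $\im m_k(e+i\varepsilon_k)$ at $\varepsilon_k\sim|I|/\akv$ is the right repair, but one must still control $|m_k-m|$ near the real axis uniformly in $k$, which in the end costs no less than the paper's direct comparison to $\Hw$. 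For the second-moment condition, your $E[\txikj(I)(\txikj(I)-1)]$ bound and the paper's Chebyshev argument on $W_{k,j}(z_k)$ are two standard dressings of the same Minami determinant estimate \ref{ClaimMin}. In short: same strategy, a clean variant in the truncation, and somewhat more delicate bookkeeping (which you anticipate) in the intensity identification.
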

\begin{proof}
In the same way as \cite{Krit1}  we   first approximate  the eigenvalue count process in a convenient way. Let us define
\begin{eqnarray}
%\xik^{\omega,e}&=&\sum_{i=1}^{|B_k|}\delta(\ak(e_i^{\omega}-e))\\
{\txik}^{\omega,e}&=&\sum_{j=1}^{n^{k-r_k}} \txikj^{\omega,e}
\end{eqnarray}
where $\txikj$, $1 \leq j \leq n^{k-r_k}$ is the eigenvalue counting measure  of  the operator  $\Hwrk$ restricted to the subspace  $\ell^2(B_{k,j})$ (already defined in equation \ref{OperRest}). From the construction of the operator we know that $\txikj$ is independent of $\txikjb$ when $j \neq j^\prime$.  As a first step of the proof, we state next that   $\xik$  and $\txik$ are asymptotically equivalent in the weak sense.
\begin{prop}\label{propL1}
For all $f \in L^1(\R)$ we have
\begin{equation}\label{limitf}
\lim_{k \rightarrow \infty}E\left[\left|\int_\R f d\xik-\int_\R f d\txik\right|\right]=0.
\end{equation}
\end{prop}
\begin{proof}
From Theorem \ref{EQUIV}, it suffices to show that the limit \ref{limitf} applies for the family of functions  on the form $f_z(v)=\im(v-z)^{-1}$, $z \in \mathbb{C}^+$. By setting  $z_k=\re(z)+\frac{\im(z)}{\ak}i$ we get
\begin{equation}
\int_\R f_{z_k} d\xik-\int_\R f_{z_k} d\txik=
\frac{1}{\akv}\sum_{x \in B_{k,j}} \EHxxk{x}-\EHxxbk{x}\label{DesH2}.
\end{equation}
The resolvent formula (see equation \ref{desresol2}) allows us to write
\begin{equation}\label{desresol3}
||(\Hw_{k}-z_k)^{-1}-(\Hwrk-z_k)^{-1}||\leq\frac{1}{|\im(z_k)|^2}\LArk
\end{equation}
and therefore
\begin{equation}
E\left[\left|\int_\R f_{z_k} d\xik-\int_\R f_{z_k} d\txik\right|\right]\leq {\akv |B_{k}|}\LArk.
\end{equation}
We deduce the result from \ref{LapCondi} and letting $k \rightarrow \infty$.
\end{proof}
We now return to the proof of the main result: from Proposition \ref{propL1} and  the Grigelionis Theorem we know that  it is enough to prove 
				\begin{eqnarray}
				\lim_{k \to{\infty}} \max_{{1}\leq{j}\leq{n^{k-r_k}}}E[\txikj(\BS)]&=&0,\label{Hip1b}\\
		%		\lim_{k \to{\infty}} \sum_{j=1}^{n_k}E[\txikj(\BS)]&\leq &\LEBM,\label{Hip2b}\\
				\lim_{k \to{\infty}} \sum_{j=1}^{n^{k-r_k}}{\mathbb{P}\{\txikj^{\omega}(\BS)\geq{2}\}}&=&0\label{Hip3b}\\
\lim_{k \to{\infty}} \sum_{j=1}^{n^{k-r_k}}E[\txikj(\BS)]&=&\eta(e)\label{Hip4b}.
				\end{eqnarray}
To prove these conditions, we first recall that for all bounded interval  $I=\left[s-\frac{t}{2},s+\frac{t}{2}\right]$ and $x \in \X$  the next inequality is satisfied
\begin{equation}\label{REPLACE}
\PIx \leq |t|\im \RIx,
\end{equation}
where $P_I(\Hw_{k})=\mathds{1}_I(\Hwk)$. In particular, for $I=\left[e-\frac{|I|}{2\ak},e+\frac{|I|}{2\ak}\right]$
\begin{eqnarray}
E[\txikj^{\omega,e}(\BS)]
&\leq&\frac{|I|}{\ak}\sum_{x \in B_{k,j}}\EHxkz{x}\\
&\leq&\frac{|I|}{\ak}\sum_{x \in B_{k,j}}\ax\\
&=&|I|\frac{\akj}{\ak},
\end{eqnarray}
therefore
\begin{equation}\label{DesH}
 \limsup_{k \rightarrow \infty}  \max_{{1}\leq{j}\leq{n^{k-r_k}}} E[\txikj^{\omega,e}(\BS)]\leq  \lim _{k \rightarrow \infty} \max_{1\leq j \leq{n^{k-r_k}}}\frac{\akj}{\ak}=0.
\end{equation}
To prove  \ref{Hip3b} we  set
\begin{equation}
\Zkjzk=\sum_{k \in B_{k,j}}\Hxkz{x},
\end{equation}
here $z_k=e+\frac{|I|}{2\ak}$. From   \ref{REPLACE}  and the Chebyshev's inequality we derive 
\begin{eqnarray}
\mathbb{P}\left[\txikj^{\omega}(\BS)\geq{2}\right]&\leq& \mathbb{P}\left[\Zkjzk\geq{2}\right]\\
&=&\mathbb{P}\left[\Zkjzk-E[\Zkjzk]\geq{2}-\Zkjzk\right]\\
&\leq& \frac{E\left[\left(\Zkjzk-E[\Zkjzk]\right)^2\right]}{({2}-E[\Zkjzk])^2}.
\end{eqnarray}
For $k$ large enough we know that $E[\Zkjzk]\leq |I|\frac{\akj}{\ak} \leq 1$,
hence
\begin{eqnarray}
\mathbb{P}\left[\txikj^{\omega}(\BS)\geq{2}\right]&\leq& E\left[\left(\Zkjzk-E[\Zkjzk]\right)^2\right]\\
&\leq&\left(|I|\frac{\akj}{\ak}\right)^2,
%&=&{E\left[\txikj^{\omega}(\BS)^2\right]-E\left[\txikj^{\omega}(\BS)\right]^2}
\end{eqnarray}
where the last inequality can be deduced from the identity
\begin{eqnarray*}
{E\left[\left(\Zkjzk-E[\Zkjzk]\right)^2\right]}&=& 
\sum_{x \in B_{k,j}}\sum_{y \in B_{k,j}}  E \det \begin{pmatrix}
\im\left\langle\delta_{x} , (\Hwrk-z_k)^{-1}\delta_{x}\right\rangle & \im\left\langle\delta_{x} , (\Hwrk-z_k)^{-1}\delta_{y}\right\rangle \\
\im\left\langle\delta_{y} , (\Hwrk-z_k)^{-1}\delta_{x}\right\rangle	& \im\left\langle\delta_{y} , (\Hwrk-z_k)^{-1}\delta_{y}\right\rangle
\end{pmatrix}
\end{eqnarray*}
and emphasizing that the same argument used to estimate $\Dkw{2}$ follows. This yield to
\begin{equation}
 \lim_{k \to{\infty}} \sum_{j=1}^{n^{k-r_k}}{\mathbb{P}\{\txikj^{\omega}(\BS)\geq{2}\}}\leq |I|^2\lim_{k\rightarrow \infty}\frac{1}{\akesq}\sum_{j=1}^{n^{k-r_k}}\akjesq=0.
\end{equation}

Finally, to prove \ref{Hip4b},  we write
\begin{eqnarray}
\sum_{j=1}^{n^{k-r_k}}E[\im(e-z_k)^{-1}\xikj^{\omega,e}(v)]
&=&\frac{1}{\akv}\sum_{x \in B_{k}} \EHxkz{x}- \EHxbz{x}\nonumber\\
&+&\frac{1}{\akv}\sum_{x \in B_{k}} \EHxbz{x}\label{DesH2b}.
\end{eqnarray}
From the resolvent formula (see \ref{desresol2}) we get
\begin{equation}
\left|\frac{1}{\akv}\sum_{x \in B_{k}} \EHxkz{x}- \EHxbz{x}\right|\leq  \akv |B_{k}|\LArk,
\end{equation}
whose limit  is zero. Finally,   the regularity condition  \ref{CONDREG2} directly implies
\begin{eqnarray}
\lim_{k\rightarrow \infty}\frac{1}{\akv}\sum_{x \in B_{k}} \EHxbz{x}=\pi \eta(e),
\end{eqnarray}
concluding the proof.
\end{proof}
Concerning the main assumptions of this work, we wake bellow some comments:
\begin{itemize}
	\item If the sequence  $||\denx||_\infty$ satisfy  the hypothesis $(H)$, assumption \ref{KeyCondi} follows directly since for all $s \in \R$
	\begin{eqnarray}
	\KeyCondC\\&\leq&{\pi}||\denx||_\infty.
	\end{eqnarray}
	This is implicitly derived in the well known inequalities of \cite{Wegner1} and \cite{Minami1}, commonly used to prove localization of eigenvalues.
	\item The assumption on the hierarchical Laplacian \ref{LapCondi} is connected with the spectral dimension of the operator and can be interpreted as a competition between the deterministic and random components of the operator (the latter implicitly represented by the value $\ak$).
	\item The regularity condition in general is met when the random variables  $V_x$, $x \in \X$ are smooth enough (we have omitted more details about the regularity of the density since our main concern is the non-homogeneity of the random field).
\end{itemize}

\section{The Hierarchical Anderson Model with non i.i.d random potential}\label{EXAMPLES}
\newcommand{\oone}{\left(1+o_k(1)\right)}
\newcommand{\oneb}{o_k(\cdot)}
We are particularly interested into the  case $$C_1(1+|x|)^{\pot} \leq \KeyInt \leq C_2(1+|x|)^{\pot}$$
 where $\gamma>-1$ and  $0\leq C_1 \leq C_2<\infty$. This example is important because has a connection with the study of spin systems with  non homogeneous external random fields: in \cite{Bissacot2015} \cite{Bissacot2010}  the two-dimensional Ising Model was considered, whereas in the most recent work \cite{Bissacot2017} similar results for the one-dimensional Dyson Model can be founded.
 Concerning our example, the natural choice is the sequence
$$\akjv=(1+\pot) \sum_{x \in B_{k,j}} (1+|x|)^{\pot}$$
with the convention $A_{k,1}=A_{k-1}$.  We state next the two main Theorems related to this  example.
\begin{theorem}\label{TeoApl}
	For all $0<d<\frac{1}{1+\frac{\pot}{2}}$ we have
	\begin{equation}\label{WeakCoApl}
P\left[	\lim_{k \rightarrow \infty}\frac{1}{\COEF}\left(\trB-\EtrB\right)=0 \right]=1. 
	\end{equation}
Moreover, there is a density $\eta$ such that for all $f \in C_0(\R)$
	\begin{equation}\label{muav}
	\int_\R f(e) d\avmu(e)=\lim_{k \rightarrow \infty}\frac{1}{\COEF}\trB.%=\lim_{k \rightarrow. \infty}\frac{1}{\COEF}\EtrB.
	\end{equation}
\end{theorem}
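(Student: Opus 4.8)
The plan is to deduce Theorem \ref{TeoApl} from Theorems \ref{WeakConv}, \ref{StrongConv} and \ref{ABSCONT}; the whole argument then reduces to a concrete choice of the weight sequence $\ax$ and of the subsequence $r_k$, plus verification of the hypotheses of those theorems. For the weights I would take $\ax=(1+\pot)(1+|x|)^{\pot}$ (after absorbing a harmless positive constant this is exactly the ``natural choice'' of the example, and all conditions to be checked are insensitive to such a constant). By the Wegner-type bound recalled in the first remark after Theorem \ref{Poissonpp}, $\int_{\R}\denx(v)\tfrac{t}{(v-s)^2+t^2}\,dv\le\pi\|\denx\|_{\infty}$ for all $s\in\R$ and $t>0$, and since the example is built precisely so that $\KeyInt$ — hence $\|\denx\|_{\infty}$ — is of order $(1+|x|)^{\pot}$, this $\ax$ satisfies the moment condition \ref{KeyCondi}, namely $\KeyCond$. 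With this choice $\akv$ and $\akjv$ are $(1+\pot)\sum_{x\in B_k}(1+|x|)^{\pot}$ and $(1+\pot)\sum_{x\in B_{k,j}}(1+|x|)^{\pot}$, so only the verification of Hypothesis $(H)$, of \ref{condLap}, and of the two summability conditions of Theorem \ref{StrongConv} remains.

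First I would record the relevant asymptotics. Since $|B_k|=n^{k}$ and $\sum_{m=1}^{N}m^{\pot}=\tfrac{N^{1+\pot}}{1+\pot}(1+o(1))$ for $\pot>-1$, one has $\akv=n^{k(1+\pot)}(1+o_k(1))$. Decomposing $B_k$ into the $n^{k-r_k}$ balls $B_{k,j}$ of radius $r_k$, the $j$-th of which sits near the integers $jn^{r_k}$, integral comparison gives $\akjv=n^{r_k(1+\pot)}\bigl(j^{1+\pot}-(j-1)^{1+\pot}\bigr)(1+o_k(1))$, whence $\sum_j\akjv=\akv$ by telescoping, and, using the convexity or concavity of $t\mapsto t^{1+\pot}$ according to the sign of $\pot$ together with elementary sums, $\sup_{1\le j\le n^{k-r_k}}\akjv/\akv=O\bigl(n^{-(k-r_k)c_1}\bigr)$ and $\akvsq^{-1}\sum_{j}\akjvsq=O\bigl(P(k)\,n^{-(k-r_k)c_2}\bigr)$, where $c_1=\min(1,1+\pot)>0$, $c_2=\min(1,2(1+\pot))>0$ and $P$ is a polynomial (constant unless $\pot=-\tfrac12$). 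Secondly, the definition \ref{SpecDim} of the spectral dimension gives $\log(1-\lambda_{r_k})=-\tfrac{2r_k}{d}(\log n)(1+o(1))$, i.e. $1-\lambda_{r_k}=n^{-\frac{2r_k}{d}(1+o_k(1))}$.

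Then I would set $r_k=\lfloor\alpha k\rfloor$ for any fixed $\alpha$ in the interval $(\tfrac{d(2+\pot)}{2},1)$, which is nonempty precisely because $d<\tfrac1{1+\pot/2}=\tfrac2{2+\pot}$ — the hypothesis of the theorem. As $k-r_k\to\infty$, the two estimates of the previous paragraph give conditions \ref{rkchoose} and \ref{rkchoose2} of Hypothesis $(H)$ together with $\sum_{k}\akvsq^{-1}\sum_{j}\akjvsq<\infty$ (a geometric decay in $k$ dominating the polynomial factor). Moreover $\tfrac{|B_k|}{\akv}=n^{-k\pot}(1+o_k(1))$, hence $\tfrac{|B_k|}{\akv}(1-\lambda_{r_k})=n^{-k\pot-\frac{2\alpha k}{d}(1+o_k(1))}$; since $\pot+\tfrac{2\alpha}{d}>\pot+(2+\pot)=2+2\pot>0$, this is $\le n^{-\delta k}$ for $k$ large and some $\delta>0$, which yields \ref{condLap} and, a fortiori, $\sum_{k}\bigl(\tfrac{|B_k|}{\akv}(1-\lambda_{r_k})\bigr)^{p}<\infty$ for every $p>1$, i.e. assumption \ref{summable}. (The same $\alpha$ even makes $\akv|B_k|(1-\lambda_{r_k})\to0$, so this subsequence also serves the companion Poisson-statistics statement.) All hypotheses of Theorems \ref{StrongConv} and \ref{ABSCONT} now hold, so Theorem \ref{StrongConv} gives the almost sure limit \ref{WeakCoApl} and Theorem \ref{ABSCONT} gives the density $\eta$ with $\|\eta\|_{\infty}<\infty$ and the identity \ref{muav}.

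The single genuine obstacle is the competing pressure on $r_k$: controlling the finite-range truncation of the blocks $B_{k,j}$ — which underlies Hypothesis $(H)$ and the second-moment summability — requires $r_k$ to be a small fraction of $k$, whereas controlling the Laplacian truncation error, of order $\akv|B_k|\,n^{-2r_k/d}$ (the exponential gain $n^{-2r_k/d}$ fighting the polynomial growth $\akv|B_k|\approx n^{(2+\pot)k}$), requires $r_k$ to be a large fraction of $k$. These are jointly feasible exactly on the window $\alpha\in(\tfrac{d(2+\pot)}{2},1)$, that is exactly when $d<\tfrac1{1+\pot/2}$. Everything else — the integral-comparison asymptotics for $\akv$ and $\akjv$, the non-uniformity in $j$ produced by the weight $(1+|x|)^{\pot}$, and the boundary value $\pot=-\tfrac12$ — is routine bookkeeping.
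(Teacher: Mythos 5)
Your proposal is correct and follows essentially the same route as the paper: take $\ax\propto(1+|x|)^{\pot}$, set $r_k=\lfloor\theta k\rfloor$ with $\theta$ in the window $\bigl(\tfrac{d(2+\pot)}{2},\,1\bigr)$ (the paper calls this parameter $\theta$, you call it $\alpha$, and the paper's condition "$(1+\tfrac{\pot}{2})d<\theta<1$" is the same interval), verify Hypothesis $(H)$ and the summability condition \ref{suma} by integral comparison, verify \ref{condLap} and \ref{summable} from the spectral-dimension asymptotic $1-\lambda_{r_k}=n^{-\frac{2r_k}{d}(1+o_k(1))}$, and then invoke Theorems \ref{StrongConv} and \ref{ABSCONT}. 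Your bookkeeping is if anything a touch cleaner than the paper's — your $\akjv=n^{r_k(1+\pot)}\bigl(j^{1+\pot}-(j-1)^{1+\pot}\bigr)(1+o_k(1))$ corrects an apparent typo in \ref{akj}, and the telescoping identity $\sum_j\akjv=\akv$ together with the explicit exponents $c_1=\min(1,1+\pot)$, $c_2=\min(1,2(1+\pot))$ makes the role of $\pot>-1$ transparent.
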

\begin{theorem}\label{PoissonppAlp}
	For all $0<d<\frac{1}{1+\frac{\pot}{2}}$, if  the re-scaled counting measure  $\xik^\omega$ satisfies the regularity condition
	\begin{equation}\label{CONDREG2}
	\lim_{\varepsilon\rightarrow 0^+}\im \int_\R\frac{1}{v-e-i\varepsilon }\eta(v)dv=\pi \eta(e),
	\end{equation}
 then $\xik^\omega$	converges to a Poisson point process with intensity  $\eta(e)$.
\end{theorem}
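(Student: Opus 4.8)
The plan is to verify that the hypotheses of Theorem \ref{Poissonpp} hold for the specific choice $\akjv=(1+\pot)\sum_{x\in B_{k,j}}(1+|x|)^{\pot}$ under the stated range $0<d<\tfrac{1}{1+\pot/2}$, and then simply invoke that theorem. There are three things to check: (i) the moment condition \ref{MomCond}, i.e. $\sup_{s>0}\int_\R \denx(v)\frac{t}{(v-s)^2+t^2}dv\leq \ax$ for a suitable sequence $\ax$ satisfying Hypothesis $(H)$; (ii) the hierarchical-Laplacian condition \ref{LapCondi}, $\lim_k \akv|B_k|\LArk=0$; (iii) the regularity condition \ref{CONDREG2}, which is assumed in the statement and needs nothing further.

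First I would set $\ax=(1+\pot)(1+|x|)^{\pot}$, so that $\akv=\sum_{x\in B_k}\ax$ and $\akjv=\sum_{x\in B_{k,j}}\ax$ agree with the given sequences. The moment condition \ref{MomCond} is then immediate from the standing assumption $\KeyInt\leq C_2(1+|x|)^{\pot}$: multiplying by $t$ and using $\frac{t}{(v-s)^2+t^2}\leq\frac{1}{t}\cdot\frac{t^2}{(v-s)^2+t^2}$ is not even needed — one directly has $\int_\R\denx(v)\frac{t}{(v-s)^2+t^2}dv\leq t\int_\R\frac{\denx(v)}{(v-s)^2+t^2}dv$, hmm, that is the wrong direction, so instead bound $\frac{t}{(v-s)^2+t^2}\leq \frac{1}{t}$ crudely only for large $t$ and use the given integral bound for the relevant regime; more cleanly, note $\sup_{s,t>0}\int\denx(v)\frac{t}{(v-s)^2+t^2}dv\le \pi\|\denx\|_\infty$ always, and combine with the hypothesis bound to get a constant multiple of $(1+|x|)^\pot$. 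So $\ax$ works up to an irrelevant constant factor (which cancels in every ratio appearing in $(H)$ and in \ref{LapCondi} after one rescales $\akv$).

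Next I would verify Hypothesis $(H)$ and condition \ref{LapCondi} by explicit asymptotics. Since $B_k=\{0,\dots,n^k-1\}$, one has $\akv\asymp n^{k(1+\pot)}$ (by comparing the sum $\sum_{x<n^k}(1+|x|)^\pot$ to the integral $\int_0^{n^k}u^\pot du\asymp n^{k(1+\pot)}$, using $\pot>-1$ so the integral converges at $0$ and grows at the top). The blocks $B_{k,j}$ for $2\le j\le n^{k-r_k}$ have the form $\{jn^{r_k}-1,\dots,(j+1)n^{r_k}-1\}$, on which $|x|\asymp jn^{r_k}$, giving $\akjv\asymp n^{r_k}(jn^{r_k})^\pot=j^\pot n^{r_k(1+\pot)}$ for $j\ge 2$, while $A_{k,1}=A_{k-1}\asymp n^{(k-1)(1+\pot)}$. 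Then:
\begin{itemize}
\item $\dfrac{\akjv}{\akv}\lesssim \dfrac{(n^{k-r_k})^\pot n^{r_k(1+\pot)}}{n^{k(1+\pot)}}=n^{-r_k}$ (the largest block being $j\approx n^{k-r_k}$, and for $\pot<0$ the $j=1$ block dominates with ratio $n^{-(1+\pot)}$; either way it $\to 0$ once $r_k\to\infty$), so \ref{rkchoose} holds;
\item $\dfrac{1}{\akvsq}\sum_{j=1}^{n^{k-r_k}}\akjvsq\lesssim \dfrac{n^{2r_k(1+\pot)}\sum_{j\le n^{k-r_k}}j^{2\pot}}{n^{2k(1+\pot)}}\asymp n^{-2r_k(1+\pot)}\cdot n^{(k-r_k)(1+2\pot)^+}\cdot\text{(lower order)}$; choosing $r_k=\lceil\epsilon k\rceil$ for small $\epsilon>0$ makes this $\to 0$, giving \ref{rkchoose2};
\item for \ref{LapCondi}: $\LArk=(1-\lambda_{r_k})\asymp n^{-2r_k/d}$ by the definition \ref{SpecDim} of the spectral dimension, while $\akv|B_k|\asymp n^{k(1+\pot)}\cdot n^{k}=n^{k(2+\pot)}$, so $\akv|B_k|\LArk\asymp n^{k(2+\pot)-2r_k/d}$, and with $r_k=\lceil\epsilon k\rceil$ this tends to $0$ iff $2\epsilon/d>2+\pot$, i.e. iff $\epsilon>\tfrac d2(2+\pot)=d(1+\pot/2)$; such an $\epsilon\in(0,1)$ exists precisely when $d(1+\pot/2)<1$, i.e. $d<\tfrac{1}{1+\pot/2}$, which is exactly the stated hypothesis.
\end{itemize}

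The single delicate point — the main obstacle — is the joint choice of the subsequence $r_k$: one needs $r_k$ to grow fast enough that \ref{LapCondi} holds (forcing $r_k/k>d(1+\pot/2)$ asymptotically) yet slowly enough that \ref{rkchoose2} still holds (which only needs $r_k\to\infty$, so this is the easy direction). Since $d(1+\pot/2)<1$ leaves room to pick $r_k=\lceil\epsilon k\rceil$ with $d(1+\pot/2)<\epsilon<1$, both requirements are met simultaneously, and one should also check the edge contribution of the $j=1$ block $A_{k,1}=A_{k-1}$ separately in \ref{rkchoose} and \ref{rkchoose2} (it contributes a ratio $\asymp n^{-(1+\pot)}$ to \ref{rkchoose}, harmless, and $\asymp n^{-2(1+\pot)}$-type terms to \ref{rkchoose2} after squaring and dividing — also harmless). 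Once $(H)$, \ref{LapCondi}, and \ref{MomCond} are in place with this $\ax$ and $r_k$, Theorem \ref{Poissonpp} applies verbatim and yields convergence of $\xik^\omega$ to a Poisson point process with intensity $\eta(e)$, given the assumed regularity condition \ref{CONDREG2}. This completes the proof.
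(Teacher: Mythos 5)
Your proposal is correct and takes essentially the same route as the paper: verify Hypothesis $(H)$ for $\ax=(1+|x|)^\pot$ via the integral comparison $\akv\asymp |B_k|^{1+\pot}$, choose $r_k=\floor{\theta k}$ with $d(1+\tfrac{\pot}{2})<\theta<1$ so that \ref{LapCondi} and the $(H)$-conditions hold simultaneously, and then invoke Theorem \ref{Poissonpp}. This is exactly the content of Proposition \ref{HipH} and the proposition following it in Section \ref{EXAMPLES}, and your identification of $d<\tfrac{1}{1+\pot/2}$ as precisely the window that makes the two competing constraints on $r_k$ compatible matches the paper's argument.
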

\subsection{Proof of Theorem \ref{TeoApl}}
The  proof is based on checking  that the assumptions of the main theorems stated in  the previous sections are met. To do this, we first  approximate these sums by integrals, obtaining   for $k$ large enough
\begin{eqnarray}
\akv&=&{|B_k|^{1+\pot}}\oone\\
A_{k,1}&=&{|B_{k,1}|^{1+\pot}}\oone\label{ak1}\\
\akjv&=& \sum_{x=1+jn^{k-r_k}}^{(j+1){n^{k-r_k}}} (1+|x|)^{\pot}  = {{j^\pot|B_{k,j}|^{\pot}}}\oone,\hspace{1cm}2 \leq j \leq \frac{|B_k|}{|B_{k,j}|}\label{akj}.
\end{eqnarray}
From equation \ref{akj} we have for $k$ large enough and $\pot \neq -1/2$
\begin{eqnarray}
\sum_{j=1}^{n^{k-r_k}}\akjesq&=& \left(|B_{k,j}|^{2+2\pot}+|B_{k,j}|^{2\pot}\sum_{j=2}^{n^{k-r_k}}j^{2\pot} \right)\oone\\
&\leq&  \left(|B_{k,j}|^{2+2\pot}+\frac{1}{|1+2\pot|}|B_{k,j}|^{2\pot}\left(\frac{|B_k|}{|B_{k,j}|}\right)^{1+2\pot}\right)\oone\label{aproxakj}.
\end{eqnarray}
Similarly, we have for $\pot=-1/2$
\begin{eqnarray}\label{Special}
\sum_{j=1}^{n^{k-r_k}}\akjesq&\leq&  \left(|B_{k,j}|^{2+2\pot}+|B_{k,j}|^{2\pot}\ln\left(\frac{|B_k|}{|B_{k,j}|}\right)\right)\oone\label{aproxakjb}.
\end{eqnarray}
In the following we will omit the case $\pot=-\frac{1}{2}$ (nevertheless,  we emphasize that the same arguments are valid). The next proposition that the sequence fulfils the Hypothesis $(H)$.
\begin{prop}\label{HipH}
	The sequence $\ax=(1+|x|)^\pot$ satisfies the Hypothesis $(H)$ for all $\pot >-1$ and moreover
		\begin{eqnarray}
	\sum_{k \geq 1} \frac{1}{\akesq}\sum_{k=1}^{n^{k-r_k}}\akjesq&<&\infty\label{suma}.
	\end{eqnarray}
\end{prop}
\newcommand{\thetare}{\theta_k}
\begin{proof}
We  take the simplest choice: given $\theta \in (0,1)$ fixed, we take $r_k=\floor{\theta k}$, $\thetare=\frac{r_k}{k}$ (here the symbol $\floor{\phantom{\theta}}$ denotes the entire part). This implies  that $\theta-\frac{1}{k}\leq \thetare\leq \theta $ for all $k .\geq 1$. In addition, for  $1 \leq j\leq |B_k|^{1-\thetare}$ we have  $|B_{k,j}|= |B_k|^{\thetare}=|B_k|^{\theta}\oone$.
 We will check below that the selected sequence satisfy the Hypothesis $(H)$.\\
\textbf{Proof of  \ref{rkchoose}:} From the estimates  \ref{ak1}, \ref{akj} we get
	\begin{eqnarray}
\limsup_{k \rightarrow \infty}\sup_{1\leq j\leq n^{k-r_k}}\frac{\akjv}{\akv}&\leq&\limsup_{k \rightarrow \infty}\frac{\max\left\{|B_{k,j}|^{1+\pot},|B_{k}|^{\pot}\right\}}{|B_k|^{1+\pot}}\\
&=&\lim_{k \rightarrow \infty}|B_{k,j}|^{\max\{(\thetare-1)(1+\pot),-1\}}\\
&=&0.
	\end{eqnarray}
	The last inequality is valid since  $(\thetare-1)(1+\pot)\leq (\theta-1)(1+\pot)<0$ for all $k \geq 1$.	\\
\textbf{Proof of \ref{rkchoose2}:} from equation \ref{aproxakj} and recalling that $|B_{k,j}|= |B_k|^{\thetare}$, we get for $\pot \neq -1/2$
		\begin{eqnarray}
	\frac{1}{\akvsq}\sum_{j=1}^{n^{k-r_k}}\akjvsq&=&\frac{1}{\akvsq}\left( |B_{k,j}|^{2+2\pot}+|B_{k,j}|^{2\pot}\left(\frac{|B_k|}{|B_{k,j}|}\right)^{1+2\pot}\right)\oone\\
	&=& \frac{1}{|B_k|^{2+2\pot}}\left( |B_{k}|^{(2+2\pot)\thetare}+|B_{k}|^{2\pot\thetare}\left(\frac{|B_k|}{|B_{k}|^{\thetare}}\right)^{1+2\pot}\right)\oone\\
	&=& (|B_{k}|^{2(1+\pot)(\thetare-1)}+ |B_{k}|^{-(\thetare+1)})\oone,
	\end{eqnarray}
since $0<\lim_{k\lra \infty}\thetare<1$, given $\varepsilon>0$ and  $k\geq k_0$ large enough,
 we have $\theta(1-\varepsilon)<\theta_k\leq \theta<1$. By taking $\theta^{\star}=\min\{2(1+\pot)(1-\theta),(\thetare+1-\varepsilon)\}>0$ we obtain
 		\begin{eqnarray}\label{sumtail}
 \frac{1}{\akvsq}\sum_{j=1}^{n^{k-r_k}}\akjvsq
 &\leq& |B_{k}|^{-\theta^{\star}}
 \end{eqnarray}
 and consequently  $\lim_{k\rightarrow \infty}\frac{1}{\akvsq}\sum_{j=1}^{n^{k-r_k}}\akjvsq=0$. \\
 \textbf{Proof of \ref{suma}:}   we only need to show that the tails of the sum are convergent. From equation \ref{sumtail} we have for all $k \geq K$
  		\begin{eqnarray}\label{sumtailb}
 \sum_{k \geq K}\frac{1}{\akvsq}\sum_{j=1}^{n^{k-r_k}}\akjvsq
 &\leq& \sum_{k \geq K} |B_{k}|^{-\theta^{\star}}\oone\\
 &\leq&\frac{|B_{K}|^{-\theta^{\star}}}{1-n^{-\theta^*}}\oone\\
 &<&\infty.
 \end{eqnarray}
\end{proof}	
The   above proposition  sufficient conditions in order that the hypothesis $(H)$ can be accomplished.  The next proposition, states the connection between the spectral dimension of the Hierarchical  Laplacian and the parameter $\pot$.
\begin{prop}
Let $d$ be the spectral dimension of the hierarchical Laplacian. Then, for all $0<d<\frac{1}{1+\frac{\pot}{2}}$ we have
\begin{eqnarray}\label{keyc}
\lim_{k\rightarrow  \infty}\ak |B_k| \LArk&=&0,\\
\sum_{k \geq 1} \left(\frac{B_k}{\ak}  \LArk\right)^p&<&\infty \hspace{1cm}\textrm{for all $p > 1$} \label{summableb}.
\end{eqnarray}
\end{prop}
\begin{proof}
In a similar way as the proof of proposition \ref{HipH}, if $0<d<1$  is the spectral dimension of the Laplacian, we fix $\theta$ satisfying $(1+\frac{\pot}{2})d<\theta<1$ and we also fix the sequence $r_k=\floor{\theta k}$. From the definition of $d$ we know that  $\sum_{s=r_k+1}^{\infty}p_s= n^{-\frac{2r_k}{d}+\epsilon_k}= |B_k|^{-\frac{2\thetare}{d}+\epsilon_k}$ for some sequence $\epsilon_k\rightarrow 0^+$, so
\begin{eqnarray}
\ak |B_k| \sum_{s=r_k+1}^{\infty}p_s &=& |B_k|^{2+\pot}|B_k|^{-\frac{2\thetare}{d}+\epsilon_k}\oone\\
&\leq& |B_k|^{2+\pot-\frac{2\theta}{d}+\epsilon_k+\frac{1}{k}}\oone\label{HierLa}.
%&\leq & \oone|B_k|^{-2\left(1-\frac{\theta}{d}\right)+\pot+{\epsilon}_k}.
\end{eqnarray}
Now, given $\varepsilon_0>0$ we have for  $K$ large enough that  $|\epsilon_k+\frac{1}{k}| \leq \varepsilon_0|2+\pot-\frac{\theta}{d}|$, then
\begin{eqnarray}
\ak |B_k| \sum_{s=r_k+1}^{\infty}p_s 
&\leq & |B_k|^{(1-\varepsilon_0)\left(2+\pot-\frac{2\theta}{d}\right)}
\end{eqnarray}
 and consequently the inequality \ref{keyc} is obtained by letting $k \lra \infty$ and recalling $2+\pot-\frac{2\theta}{d}<0$. To prove  \ref{summableb}, we use the same estimates to deduce
 \begin{eqnarray}
\frac{ |B_k|}{\ak} \LArk &\leq& |B_k|^{-\pot}|B_k|^{-\frac{2\thetare}{d}+\epsilon_k}\oone\\
 &\leq& |B_k|^{-\pot-\frac{2\theta}{d}+\epsilon_k+\frac{1}{k}}\label{HierLaB}\oone\\
 &\leq& |B_k|^{-\pot-\frac{2\theta}{d}+\varepsilon_0|2+\pot-\frac{\theta}{d}|}\oone\label{HierLaC}.
% &\leq &\oone |B_k|^{-2\left(1-\frac{\theta}{d}\right)+{\epsilon}_k}
 \end{eqnarray}
 By noticing that $2+\pot>-\pot$ when $\pot>-1$, we can write
  \begin{eqnarray}
 \frac{ |B_k|}{\ak} \sum_{s=r_k+1}^{\infty}p_s &\leq& |B_k|^{(1-\varepsilon_0)\left(2+\pot-\frac{2\theta}{d}\right)}\label{HierLaD}\oone.
% &\leq&|B_k|^{-(1-\varepsilon_0)\frac{2\theta}{d}}\oone.
 % &\leq &\oone |B_k|^{-2\left(1-\frac{\theta}{d}\right)+{\epsilon}_k}
 \end{eqnarray}
 The above inequality allows us to conclude that for all $p>1$ and $K$ large enough
   \begin{eqnarray}
 \sum_{k \geq K}\left(\frac{ |B_k|}{\ak} \LArk\right)^p &\leq&\oone \sum_{k \geq K}|B_k|^{(1-\varepsilon_0)\left(2+\pot-\frac{2\theta}{d}\right)p}\\
 &<&\infty.
 % &\leq &\oone |B_k|^{-2\left(1-\frac{\theta}{d}\right)+{\epsilon}_k}
 \end{eqnarray}
We conclude the proof emphasizing that for $\pot=-\frac{1}{2}$ the same arguments are valid  if we use  \ref{Special} instead of  \ref{aproxakj}.\\
\end{proof}
\section*{Acknowledgments}
The author acknowledges the financial  support  of the fellowship program FONDECYT de Iniciaci\'on en Investigaci\'on, Project No. 11140479  and to the following Mathematics Institutes where part of this work were done: Universidad Cat\'olica del Norte (Antofagasta, Chile), I2M (Marseille, France), Center of Mathematical Modeling (Santiago, Chile) and Instituto Potosino de Investigaci\'on Cient\'ifica y Tecnol\'ogica (San Luis Potos\'i, M\'exico). The author would like also to  Ariel P\'erez, Andrea Hern\'andez and Mar\'ia Villegas for their helpful contribution  at the beginning of this work, in the context of their corresponding Msc. Thesis. 
\bibliographystyle{apalike}
\bibliography{Bibliography}

\begin{thebibliography}{}

\bibitem[Aizenman and Warzel, 2006]{Aizenman2006}
Aizenman, M. and Warzel, S. (2006).
\newblock The canopy graph and level statistics for random operators on trees.
\newblock {\em Mathematical Physics, Analysis and Geometry}, 9(4):291--333.

\bibitem[Aizenman and Warzel, 2015]{Aize1}
Aizenman, M. and Warzel, S. (2015).
\newblock {\em Random Operators: Disorder Effects on Quantum Spectra and
  Dynamics}.
\newblock Graduate Studies in Mathematics Volume: 168. American Mathematical
  Society.

\bibitem[Bellissard et~al., 2007]{Bellissard2007}
Bellissard, J.~V., Hislop, P.~D., and Stolz, G. (2007).
\newblock Correlation estimates in the {A}nderson model.
\newblock {\em Journal of Statistical Physics}, 129(4):649--662.

\bibitem[Billingsley, 1999]{billing}
Billingsley, P. (1999).
\newblock {\em Convergence of probability measures}.
\newblock Wiley Series in Probability and Statistics: Probability and
  Statistics. John Wiley \& Sons Inc., New York, second edition.
\newblock A Wiley-Interscience Publication.

\bibitem[Bissacot et~al., 2015]{Bissacot2015}
Bissacot, R., Cassandro, M., Cioletti, L., and Presutti, E. (2015).
\newblock Phase transitions in ferromagnetic ising models with spatially
  dependent magnetic fields.
\newblock {\em Communications in Mathematical Physics}, 337(1):41--53.

\bibitem[Bissacot and Cioletti, 2010]{Bissacot2010}
Bissacot, R. and Cioletti, L. (2010).
\newblock Phase transition in ferromagnetic ising models with a non-uniform
  external magnetic fields.
\newblock {\em Journal of Statistical Physics}, 139(5):769--778.

\bibitem[Bissacot et~al., 2017]{Bissacot2017}
Bissacot, R., Endo, E.~O., van Enter, A. C.~D., Kimura, B., and Ruszel, W.~M.
  (2017).
\newblock Contour methods for {D}yson models: weakening nearest-neighbor
  interactions and adding decaying fields (pre-print).

\bibitem[Combes et~al., 2009]{Combes2009}
Combes, J.-M., Germinet, F., and Klein, A. (2009).
\newblock Generalized eigenvalue-counting estimates forÂ theÂ {A}nderson
  model.
\newblock {\em Journal of Statistical Physics}, 135(2):201.

\bibitem[Grigelionis, 1963]{Grige1}
Grigelionis, B. (1963).
\newblock The convergence of stepwise random processes to a {P}oisson process.
\newblock {\em Theory of Probability and its Applications}, (8):177--182.

\bibitem[Killip and Nakano, 2007]{Killip2007}
Killip, R. and Nakano, F. (2007).
\newblock Eigenfunction statistics in the localized {A}nderson model.
\newblock {\em Annales Henri Poincar{\'e}}, 8(1):27--36.

\bibitem[Killip and Stoiciu, 2009]{killip2009}
Killip, R. and Stoiciu, M. (2009).
\newblock Eigenvalue statistics for cmv matrices: From {P}oisson to clock via
  random matrix ensembles.
\newblock {\em Duke Math. J.}, 146(3):361--399.

\bibitem[Koralov and Sinai, 2007]{Sinai1}
Koralov, L. and Sinai, Y. (2007).
\newblock {\em Theory of Probability and Random Processes}.
\newblock Universitext. Springer Berlin Heidelberg.

\bibitem[Kritchevski, 2007]{Krit1}
Kritchevski, E. (2007).
\newblock Spectral localization in the {H}ierarchical {A}nderson model.
\newblock {\em Proceedings of the American Mathematical Society},
  135(5):1431--1440.

\bibitem[Kritchevski, 2008]{Krit2}
Kritchevski, E. (2008).
\newblock {P}oisson statistics of eigenvalues in the {H}ierarchical {A}nderson
  model.
\newblock {\em Annales Henri Poincar{\'e}}, 9(4):685--709.

\bibitem[Minami, 1996]{Minami1}
Minami, N. (1996).
\newblock Local fluctuation of the spectrum of a multidimensional {A}nderson
  tight binding model.
\newblock {\em Comm. Math. Phys.}, 177(3):709--725.

\bibitem[Molchanov, 1994]{Mol1}
Molchanov, S. (1994).
\newblock {\em Lectures on random media}, pages 242--411.
\newblock Springer Berlin Heidelberg, Berlin, Heidelberg.

\bibitem[Molchanov, 1996]{Mol2}
Molchanov, S. (1996).
\newblock {H}ierarchical random matrices and operators. application to
  {A}nderson model.
\newblock {\em Multidimensional statistical analysis and theory of random
  matrices (Bowling Green, OH, 1996)}, pages 179--194.

\bibitem[Wegner, 1981]{Wegner1}
Wegner, F. (1981).
\newblock Bounds on the density of states in disordered systems.
\newblock {\em Zeitschrift f{\"u}r Physik B Condensed Matter}, 44(1):9--15.

\end{thebibliography}
\end{document}